\documentclass[12pt]{amsart}
\usepackage{latexsym, amsthm, amscd, euscript, float, subfig}
\setlength{\textwidth}{16cm} \setlength{\oddsidemargin}{0cm}
\setlength{\evensidemargin}{0cm} \setlength{\footskip}{40pt}

\parskip4pt plus2pt minus2pt


\usepackage{color}
\usepackage{cite}
\usepackage{amssymb}
\usepackage{amsmath}
\usepackage{graphicx}
\usepackage[normalem]{ulem}
\usepackage{multirow}

\usepackage{array}

\newtheorem{theorem}{Theorem}[section]
\newtheorem{lemma}[theorem]{Lemma}
\newtheorem{corollary}[theorem]{Corollary}
\newtheorem{proposition}[theorem]{Proposition}
\newtheorem{ca}{Case}

\newtheorem{remark}[theorem]{Remark}

\newtheorem*{acknowledgement}{Acknowledgment}

\newtheorem{conjecture}[theorem]{Conjecture}

\numberwithin{equation}{section}

\newcounter{minutes}\setcounter{minutes}{\time}
\divide\time by 60
\newcounter{hours}\setcounter{hours}{\time}
\multiply\time by 60
\addtocounter{minutes}{-\time}


\newcommand{\IR}{\mathbb{R}}
\newcommand{\IB}{\mathbb{B}}
\newcommand{\ds}{\displaystyle}

\newcommand{\Bn}{ {\mathbb{B}^n} }
\newcommand{\Rn}{ {\mathbb{R}^n} }

\renewcommand{\sinh}{\,\textnormal{sinh}}

\def\be{\begin{equation}}
\def\ee{\end{equation}}

\newcommand{\bee}{\begin{enumerate}}
\newcommand{\eee}{\end{enumerate}}

\newcommand{\blem}{\begin{lem}}
\newcommand{\elem}{\end{lem}}
\newcommand{\bthm}{\begin{thm}}
\newcommand{\ethm}{\end{thm}}
\newcommand{\bcor}{\begin{cor}}
\newcommand{\ecor}{\end{cor}}
\newcommand{\beg}{\begin{exam}}
\newcommand{\eeg}{\end{exam}}
\newcommand{\begs}{\begin{examples}}
\newcommand{\eegs}{\end{examples}}
\newcommand{\bdefe}{\begin{defn}}
\newcommand{\edefe}{\end{defn}}
\newcommand{\bprob}{\begin{prob}}
\newcommand{\eprob}{\end{prob}}
\newcommand{\bques}{\begin{ques}}
\newcommand{\eques}{\end{ques}}
\newcommand{\bei}{\begin{itemize}}
\newcommand{\eei}{\end{itemize}}
\newcommand{\bcon}{\begin{conj}}
\newcommand{\econ}{\end{conj}}
\newcommand{\bop}{\begin{op}}
\newcommand{\eop}{\end{op}}

\newcommand{\bca}{\begin{ca}}
\newcommand{\eca}{\end{ca}}
\newcommand{\bsca}{\begin{sca}}
\newcommand{\esca}{\end{sca}}

\newcommand{\bcl}{\begin{cl}}
\newcommand{\ecl}{\end{cl}}

\newcommand{\bscl}{\begin{scl}}
\newcommand{\escl}{\end{scl}}

\newcommand{\bcons}{\begin{conjs}}
\newcommand{\econs}{\end{conjs}}
\newcommand{\bprop}{\begin{propo}}
\newcommand{\eprop}{\end{propo}}
\newcommand{\br}{\begin{rem}}
\newcommand{\er}{\end{rem}}
\newcommand{\brs}{\begin{rems}}
\newcommand{\ers}{\end{rems}}
\newcommand{\bo}{\begin{obser}}
\newcommand{\eo}{\end{obser}}
\newcommand{\bos}{\begin{obsers}}
\newcommand{\eos}{\end{obsers}}
\newcommand{\bpf}{\begin{pf}}
\newcommand{\epf}{\end{pf}}
\newcommand{\ba}{\begin{array}}
\newcommand{\ea}{\end{array}}
\newcommand{\beq}{\begin{eqnarray}}
\newcommand{\beqq}{\begin{eqnarray*}}
\newcommand{\eeq}{\end{eqnarray}}
\newcommand{\eeqq}{\end{eqnarray*}}



\thanks{* The corresponding author}

\begin{document}
\vspace*{-2cm}
\title[A Gromov hyperbolic metric vs the hyperbolic and other related metrics]
{A Gromov hyperbolic metric vs the hyperbolic and other related metrics}

\def\thefootnote{}
\footnotetext{ \texttt{\tiny File:~\jobname .tex,
          printed: \number\day-\number\month-\number\year,
          \thehours.\ifnum\theminutes<10{0}\fi\theminutes}
} \makeatletter\def\thefootnote{\@arabic\c@footnote}\makeatother

\author[M. R. Mohapatra]{Manas Ranjan Mohapatra}
\address{Manas Ranjan Mohapatra, Discipline of Mathematics,
Indian Institute of Technology Indore,
Simrol, Khandwa Road, Indore 453 552, India
}
\email{mrm.iiti@gmail.com}

\author[S. K. Sahoo]{Swadesh Kumar Sahoo$^{*}$}
\address{Swadesh Kumar Sahoo, Discipline of Mathematics,
Indian Institute of Technology Indore,
Simrol, Khandwa Road, Indore 453 552, India}
\email{swadesh@iiti.ac.in}

\begin{abstract}
We mainly consider two metrics: a Gromov hyperbolic metric and a scale invariant Cassinian metric. We 
compare these two metrics and obtain their relationship
with certain well-known hyperbolic-type metrics, leading to several inclusion relations 
between the associated metric balls. 
\\

\smallskip
\noindent
{\bf 2010 Mathematics Subject Classification}. 30F45, 51M05, 51M10.

\smallskip
\noindent
{\bf Key words and phrases.}
Hyperbolic-type metrics,
the Cassinian metric, Gromov hyperbolicity, metric balls, inclusion properties.
\end{abstract}

\maketitle
\thispagestyle{empty}
\section{Introduction}
Comparison of hyperbolic-type metrics, defined over proper subdomains of $\Rn$, is a significant part of geometric function theory as it reveals the geometric property of the domain. For example, uniform 
domains are defined by comparing the quasihyperbolic metric \cite{GP76} and the distance ratio metric \cite{GO79}. 
In recent years, many authors have contributed to
the study of hyperbolic-type metrics. Some of the familiar hyperbolic-type metrics are
the Apollonian metric \cite{Bea98,BI08,Has03,Ibr03}, the half-Apollonian metric \cite{HL04}, the Seittenranta metric \cite{Sei99}, the Cassinian metric \cite{Ibr09,IMSZ,KMS,HKVZ},
the triangular ratio metric \cite{CHKV15}, etc. These metrics are also referred as the {\em relative metrics} since they are defined in proper subdomains of the Euclidean space $\Rn$, $n\ge 2$, relative to domain boundaries.
A general form of some of these relative metrics has been considered by P. H\"ast\"o in \cite[Lemma~6.1]{Has02}, in a different context.

Very recently, a scale invariant version of the Cassinian metric has been studied
by Ibragimov in \cite{Ibr16} which is defined by
$$\tilde{\tau}_D(x,y)=\log\left(1+\sup_{p\in \partial D}\frac{|x-y|}{\sqrt{|x-p||p-y|}}\right), \quad x,y\in D\subsetneq \Rn.
$$
The interesting part of this metric is that many
properties in arbitrary domains are revealed in the setting of once-punctured spaces.
For example, $\tilde{\tau}_D$ is a metric in an arbitrary domain $D\subsetneq \Rn$ if it is a metric
on once-punctured spaces. The $\tilde{\tau}_D$-metric is comparable with the Vuorinen's distance ratio metric \cite{Vuo85} in
arbitrary domains $D\subsetneq \Rn$ if they are comparable in the punctured spaces (see \cite{Ibr16}). 
It is appropriate here to recall that the $\tilde{\tau}_D$-metric satisfies the domain monotonicity property, i.e. if $D_1,D_2\subsetneq \Rn$ with $D_1\subset D_2$, then 
$\tilde{\tau}_{D_2}(x,y)\le \tilde{\tau}_{D_1}(x,y)$ for all $x,y\in D_1$ \cite[p.~2]{Ibr16}.

Gromov in 1987 introduced the notion of an abstract hyperbolic space \cite{Gro87}. 
Let $(D,d)$ be
a metric space and $x,y,z\in D$. The {\em Gromov product}\index{Gromov product} of $x$ and $y$ with respect to $z$
is defined by the formula
$$(x|y)_z=\frac{1}{2} \left[d(x,z)+d(y,z)-d(x,y)\right].
$$
The metric space $(D,d)$ is said to be {\em Gromov hyperbolic}\index{Gromov hyperbolic} if there exists $\beta \ge 0$ such that
$$(x|y)_w\ge (x|z)_w \wedge (z|y)_w -\beta
$$
for all $x,y,z,w\in D$. We also say that $D$ is $\beta$-hyperbolic. Equivalently, 
the metric space $(D,d)$ is called Gromov hyperbolic if and only if there exist a constant $\beta>0$ such that
$$d(x,z)+d(y,w)\le (d(x,w)+d(y,z))\vee (d(x,y)+d(z,w))+2\beta
$$
for all points $x,y,z,w \in D$. Note that Gromov hyperbolicity is preserved under
quasi-isometries\index{quasi-isometry}. That means it is preserved under the mappings $f:(D,d_1)\to (f(D),d_2)$ satisfying
$$\frac{1}{\lambda} d_1(x,y)-k\le d_2(f(x),f(y))\le \lambda d_1(x,y)+k, \quad x,y\in D,
$$ 
where $\lambda \ge 1$ and $k\ge 0$. Literature on Gromov hyperbolicity are available in
\cite{BS00,Gro87,Has05,Ibr11,Ibr11-I,Vai05}.
One natural question was to investigate {\em whether a metric space is hyperbolic in the
sense of Gromov or not}? 
Ibragimov in \cite{Ibr11} introduced a metric, $u_Z$, which hyperbolizes 
(in the sense of Gromov) the locally compact non-complete metric space $(Z,d)$ without changing its quasiconformal geometry, by 
$$u_Z(x,y)=2\log \frac{d(x,y)+\max\{{\rm dist(x, \partial Z)},{\rm dist(y, \partial Z)}\}}{\sqrt{{\rm dist(x, \partial Z)}\,{\rm dist(y, \partial Z)}}}, \quad x,y\in Z.
$$ 
For a domain $D\subsetneq \Rn$ equipped with the Euclidean metric, the {\em $u_D$-metric} is defined by
$$u_D(x,y)=2\log \frac{|x-y|+\max\{{\rm dist}(x,\partial D),{\rm dist}(y,\partial D)\}}
{\sqrt{{\rm dist}(x,\partial D)\,{\rm dist}(y,\partial D)}}, \quad x,y\in D.
$$
Note that the $u_D$-metric does not satisfy the domain monotonicity property and
it coincides with the Vuorinen's distance ratio metric in punctured spaces $\Rn\setminus\{p\}$, for $p\in\Rn$. 
Though comparisons of the $u_D$-metric with some hyperbolic-type metrics are studied in \cite{Ibr11}, 
in this paper, we further compare the $u_D$-metric with the hyperbolic metric and other related metrics which were not
considered in \cite{Ibr11}.

It is well known that the geometric structure of a metric space can be viewed from the geometric structure of the fundamental element, namely, the metric balls. Hence it is reasonable to study metric balls and their inclusion relations 
with other metric balls by fixing the centre common to each pair of metric balls. In this
regard we study inclusion relations associated with the metric balls.

The paper is organized as follows:
Section~\ref{prel} is devoted to the preliminaries for the upcoming sections. 
In Section~\ref{u-rho-bn}, we compare the $u_D$-metric with the hyperbolic metric of the 
unit ball and upper half space. In Section~\ref{u-tildetau}, we compare the $u_D$-metric
with the $\tilde{\tau}_D$-metric. Comparisons of the $u_D$-metric and the $\tilde{\tau}_D$-metric with other metrics are discussed in Section~\ref{u-tildetau-other}.

\section{Preliminaries}\label{prel}

Throughout the paper, $D$ denotes an arbitrary, proper subdomain of the Euclidean space $\IR^n$.
Symbolically, we write $D\subsetneq \IR^n$. 
Given $x\in\IR^n$ and $r>0$, the open ball centered at $x$ and of radius $r$ is denoted by $B(x,r)=\{y\in\IR^n\colon\, |x-y|<r\}$. 
Set $\IB^n=B(0,1)$. We denote by $D_p$ and $D_{p,q}$, the punctured spaces $\Rn\setminus\{p\}$ and $\Rn\setminus\{p,q\}$ respectively.
For a given $x\in D$, we set $d(x):={\rm dist}(x,\partial D)$.
For real numbers $r$ and $s$, we set $r\vee s=\max\{r,\, s\}$ and $r\wedge s=\min\{r,\, s\}$.

The {\it distance ratio metric}, $\tilde{j}_D$, is defined by
$$
\tilde{j}_D(x,y)=\log\left(1+\frac{|x-y|}{d(x)\wedge d(y)}\right), \quad x,y\in D.
$$
The above form of the metric $\tilde{j}_D$, which was first considered in \cite{Vuo85},
is a slight modification of the original distance ratio metric, $j_D$, of Gehring and Osgood \cite{GO79}, defined by 
$$j_D(x,y)=\frac{1}{2}\log \left(1+\frac{|x-y|}{d(x)}\right) \left(1+\frac{|x-y|}{d(y)}\right),\quad x,y\in D.
$$
The $\tilde{j}_D$-metric has been widely studied in the literature; see, for instance, \cite{Vuo88}. 
These two distance ratio metrics are related by: 
$\tilde{j}_D(x,y)/2 \le j_D(x,y)\le \tilde{j}_D(x,y)$; see \cite{Sei99,Has05}.

The {\it{hyperbolic metric}}, $\rho_{\IB^n}$, of the unit ball $\IB^n$ is given by
$$
\rho_{\Bn}(x,y)=\inf_\gamma\int_\gamma \frac{2|{\rm d}z|}{1-|z|^2},
$$
where the infimum is taken over all rectifiable curves $\gamma\subset \IB^n$
joining $x$ and $y$.

 Now,
we define the $d$-metric ball (metric ball with respect to the metric $d$) as follows: let $(D,d)$
be a metric space. Then the set 
$$B_d(x,R)=\{z\in D:\,d(x,z)<R\}
$$
is called the {\em $d$-metric ball} of the domain $D$. 

\section{Comparison of the $u_D$-metric with the hyperbolic metric}\label{u-rho-bn}
This section is devoted to the comparison of the $u_D$-metric and the $\rho_{D}$-metric 
when $D=\Bn$ and $D=\mathbb{H}^n:=\{(x_1,x_2,\ldots,x_{n}):x_n>0\}$.
The hyperbolic metric of the unit ball $\Bn$, $\rho_{\Bn}$, can be computed by the following formula 
(see \cite[p.~40]{Bea95}).
\begin{equation}\label{def-rho}
\sinh\left(\frac{\rho_{\Bn}(x,y)}{2}\right)=\frac{|x-y|}{\sqrt{(1-|x|^2)(1-|y|^2)}}.
\end{equation}

We first establish the relationship between the $u_\Bn$-metric and the $\rho_\Bn$-metric. 
In this setting, the following lemma is useful which yields a relationship between 
the $u_D$-metric and the $j_D$-metric in arbitrary subdomains of $\Rn$.

\begin{lemma}\label{lem-u-j}
Let $D\subsetneq \Rn$ be arbitrary. Then for $x,y\in D$ we have
$$2j_D(x,y)\le u_D(x,y)\le 4 j_D(x,y).
$$
The first inequality becomes equality when $d(x)=d(y)$. 
\end{lemma}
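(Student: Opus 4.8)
The plan is to reduce the whole statement to an elementary inequality in three nonnegative real parameters and then invoke the Lipschitz property of the distance function. I would first assume without loss of generality that $d(x)\le d(y)$ and abbreviate $a=d(x)$, $b=d(y)$, $t=|x-y|$, so that $\max\{d(x),d(y)\}=b$ and $a\le b$. Rewriting both metrics over a common logarithm gives
$$2j_D(x,y)=\log\frac{(a+t)(b+t)}{ab},\qquad u_D(x,y)=\log\frac{(b+t)^2}{ab},$$
so that after exponentiating, the claim $2j_D\le u_D\le 4j_D$ becomes the purely algebraic chain
$$(a+t)(b+t)\le (b+t)^2\le\frac{(a+t)^2(b+t)^2}{ab},$$
where all three members have been multiplied by the positive quantity $ab$.

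For the left-hand inequality I would cancel the common factor $(b+t)$, reducing it to $a+t\le b+t$, which is immediate from $a\le b$. Since $t>0$ whenever $x\ne y$, this is an equality exactly when $a=b$, i.e. $d(x)=d(y)$, which accounts for the stated equality case.

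The right-hand inequality carries the real content. Dividing it through by $(b+t)^2$ reduces the problem to
$$ab\le (a+t)^2.$$
I expect this to be the main obstacle, because the inequality is false for unconstrained $a\le b$ and $t$ (take $t=0$ and $a<b$); the parameters are saved only by the geometry of $D$. The key step is the observation that $x\mapsto d(x)={\rm dist}(x,\partial D)$ is $1$-Lipschitz: for each $p\in\partial D$ we have $d(x)\le|x-y|+|y-p|$, and taking the infimum over $p\in\partial D$ yields $|d(x)-d(y)|\le|x-y|$, i.e. $b-a\le t$, so $b\le a+t$. With this in hand,
$$ab\le a(a+t)\le(a+t)(a+t)=(a+t)^2,$$
as required. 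Combining the two inequalities finishes the argument, and the only ingredient beyond elementary algebra is this Lipschitz estimate for the distance to the boundary.
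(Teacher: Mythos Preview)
Your argument is correct and coincides with the paper's: both reduce $u_D\le 4j_D$ to the inequality $d(x)d(y)\le(\min\{d(x),d(y)\}+|x-y|)^2$ and invoke the $1$-Lipschitz bound on $d(\cdot)$ (the paper calls it ``the triangle inequality''). The only difference is cosmetic: the paper cites \cite[Theorem~3.1]{Ibr11} for $2j_D\le u_D$, whereas you verify it directly from $a\le b$.
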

\begin{proof}
The first inequality is proved in \cite[Theorem~3.1]{Ibr11}. From the definitions
of the $j_D$-metric and the $u_D$-metric, it follows that $2j_D(x,y)=u_D(x,y)$ whenever $d(x)=d(y)$.
Now, we shall prove the second inequality. Without loss of generality we assume that $d(x)\ge d(y)$ for $x,y\in D\subsetneq \Rn$. To show our claim, it is enough to prove that 
$$\frac{|x-y|+d(x)}{\sqrt{d(x)d(y)}}\le \left(1+\frac{|x-y|}{d(x)}\right)\left(1+\frac{|x-y|}{d(y)}\right),
$$
or, equivalently,
$$d(x)d(y)\le (d(y)+|x-y|)^2
$$
which is true by the triangle inequality. The proof is complete.
\end{proof}

\begin{theorem}\label{thm-rho-u}
For all $x,y\in \Bn$ we have
$$\frac{1}{2}\rho_{\Bn}(x,y)\le u_{\Bn}(x,y)\le 4\rho_{\Bn}(x,y).
$$
\end{theorem}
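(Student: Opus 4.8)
The plan is to reduce everything to the already-established Lemma~\ref{lem-u-j}, which on $D=\mathbb{B}^n$ (where $d(x)=1-|x|$) reads $2j_{\mathbb{B}^n}(x,y)\le u_{\mathbb{B}^n}(x,y)\le 4j_{\mathbb{B}^n}(x,y)$. Thus it suffices to sandwich the hyperbolic metric between the two multiples of $j_{\mathbb{B}^n}$ that occur: for the upper estimate I would prove $j_{\mathbb{B}^n}(x,y)\le \rho_{\mathbb{B}^n}(x,y)$, so that $u_{\mathbb{B}^n}\le 4j_{\mathbb{B}^n}\le 4\rho_{\mathbb{B}^n}$; for the lower estimate I would prove $\rho_{\mathbb{B}^n}(x,y)\le 4j_{\mathbb{B}^n}(x,y)$, so that $\tfrac12\rho_{\mathbb{B}^n}\le 2j_{\mathbb{B}^n}\le u_{\mathbb{B}^n}$. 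Throughout I would write $a=|x-y|$, $d_x=1-|x|$, $d_y=1-|y|$ and exploit the two elementary facts $d_x\le 1-|x|^2\le 2d_x$ (since $1\le 1+|x|\le 2$) to pass between the hyperbolic density and the boundary distance.

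For $j_{\mathbb{B}^n}\le \rho_{\mathbb{B}^n}$, rather than handle $j_{\mathbb{B}^n}$ directly (whose exponential carries an awkward fourth root) I would route through the distance ratio metric $\tilde{j}_{\mathbb{B}^n}$, using $j_{\mathbb{B}^n}\le \tilde{j}_{\mathbb{B}^n}$ from the relation recalled in Section~\ref{prel}. The advantage is that $\sinh(\tilde{j}_{\mathbb{B}^n}/2)$ has the closed form $a/(2\sqrt{m(m+a)})$ with $m=d_x\wedge d_y$, so comparing it with $\sinh(\rho_{\mathbb{B}^n}/2)=a/\sqrt{(1-|x|^2)(1-|y|^2)}$ via formula~\eqref{def-rho} reduces the inequality $\tilde{j}_{\mathbb{B}^n}\le \rho_{\mathbb{B}^n}$ to the purely algebraic claim $(1-|x|^2)(1-|y|^2)\le 4m(m+a)$. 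Assuming without loss of generality that $m=d_x$, I would bound the left-hand side by $4d_xd_y$ and then close the argument with the triangle inequality $d_y=1-|y|\le 1-|x|+|x-y|=d_x+a$.

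For the reverse bound $\rho_{\mathbb{B}^n}\le 4j_{\mathbb{B}^n}$, I would start from $\sinh(\rho_{\mathbb{B}^n}/2)\le a/\sqrt{d_xd_y}$ (using $1-|x|^2\ge d_x$), apply the elementary estimate $\operatorname{arsinh} t=\log(t+\sqrt{1+t^2})\le \log(1+2t)$, and thereby obtain $\rho_{\mathbb{B}^n}\le 2\log\!\big(1+2a/\sqrt{d_xd_y}\big)$. It then remains to verify $1+2a/\sqrt{d_xd_y}\le (1+a/d_x)(1+a/d_y)=e^{2j_{\mathbb{B}^n}}$, which after expanding the product is exactly the AM--GM inequality $2/\sqrt{d_xd_y}\le 1/d_x+1/d_y$ (with a nonnegative term $a/(d_xd_y)$ to spare). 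This gives $\rho_{\mathbb{B}^n}\le 2\cdot 2j_{\mathbb{B}^n}=4j_{\mathbb{B}^n}$.

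I expect the main obstacle to be the lower bound $\rho_{\mathbb{B}^n}\le 4j_{\mathbb{B}^n}$: both the $\operatorname{arsinh}$ linearization and the choice of how loosely to bound $1-|x|^2$ must be calibrated so that the final inequality collapses to AM--GM rather than to something false, whereas the upper bound $j_{\mathbb{B}^n}\le \rho_{\mathbb{B}^n}$ is comparatively robust since it reduces directly to the triangle inequality. Assembling the four pieces then yields $\tfrac12\rho_{\mathbb{B}^n}(x,y)\le u_{\mathbb{B}^n}(x,y)\le 4\rho_{\mathbb{B}^n}(x,y)$.
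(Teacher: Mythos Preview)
Your proposal is correct and follows the same overall strategy as the paper: both arguments combine Lemma~\ref{lem-u-j} with the sandwich $j_{\Bn}\le\rho_{\Bn}\le 4j_{\Bn}$. The only difference is that the paper obtains this sandwich in one line by quoting the known inequalities $\tilde{j}_{\Bn}\le\rho_{\Bn}\le 2\tilde{j}_{\Bn}$ (from \cite{AVV}) and $\tfrac12\tilde{j}_D\le j_D\le\tilde{j}_D$ (from \cite{Vuo88,Sei99}), whereas you re-derive these estimates from scratch via the explicit formula~\eqref{def-rho}, the identity $\sinh(\tilde{j}_{\Bn}/2)=a/(2\sqrt{m(m+a)})$, the bound $\mathrm{arsinh}\,t\le\log(1+2t)$, and AM--GM. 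Your route is self-contained but longer; the paper's is a pure citation chain.
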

\begin{proof}
From \cite[p.~151]{AVV} and from \cite[p.~29]{Vuo88} we have respectively the following two inequalities: 
\begin{equation}\label{jrho}
\tilde{j}_{\Bn}(x,y)\le \rho_{\Bn}(x,y)\le 2\tilde{j}_{\Bn}(x,y) 
\mbox{ and } \frac{1}{2}\tilde{j}_D(x,y)
\le j_D(x,y)\le \tilde{j}_D(x,y).
\end{equation}
Now the proof of our theorem follows from Lemma~\ref{lem-u-j}.
\end{proof}

Observe that for the choice of points $x,y\in \Bn$ with $y=-x$,

\[u_{\Bn}(x,-x)=2\log \left(\frac{1+|x|}{1-|x|}\right)=2 \rho_{\Bn}(0,x)=\rho_{\Bn}(x,-x).
\]

This observation leads to the following conjecture.
\begin{conjecture}
For $x,y\in \Bn$ we have $\rho_{\Bn}(x,y)\le u_{\Bn}(x,y)\le 2\rho_{\Bn}(x,y)$.
\end{conjecture}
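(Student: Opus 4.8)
The plan is to work directly with the closed forms of both metrics in $\Bn$, using that each depends only on $|x|$, $|y|$ and $|x-y|$. Write $a=d(x)=1-|x|$ and $b=d(y)=1-|y|$, put $t=|x-y|$, $w=\sqrt{ab}$ and $v=\sqrt{(2-a)(2-b)}$, and assume without loss of generality $a\ge b$. Since $1-|x|^2=a(2-a)$ and $1-|y|^2=b(2-b)$, we have $\sqrt{(1-|x|^2)(1-|y|^2)}=wv$, so from the $u_{\Bn}$ definition $e^{u_{\Bn}(x,y)/2}=(t+a)/w$, while \eqref{def-rho} gives $\cosh(\rho_{\Bn}(x,y)/2)=\sqrt{abv^2+t^2}/(wv)$ and hence
\[ e^{\rho_{\Bn}(x,y)/2}=\frac{t+\sqrt{abv^2+t^2}}{wv}. \]
For fixed $|x|,|y|$ the quantity $t$ ranges exactly over $[\,a-b,\,2-a-b\,]$ (a nonempty interval as $a\le1$), so the whole statement becomes a one-variable inequality in $t$ on this interval with parameters $a\ge b$ in $(0,1]$.

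For the lower bound $\rho_{\Bn}\le u_{\Bn}$ I would compare $e^{\rho_{\Bn}/2}\le e^{u_{\Bn}/2}$, which reads $t+\sqrt{abv^2+t^2}\le v(t+a)$. Isolating the square root (the right side being positive) and squaring turns this into $g(t)\ge0$, where $g(t)=(v-2)t^2+2a(v-1)t+v(a^2-ab)$. Because $v<2$, the quadratic $g$ is concave in $t$, so it suffices to check the two endpoints. At $t=a-b$ one gets $g(a-b)=(a-b)\big(v(4a-b)-(4a-2b)\big)\ge0$, immediate from $v\ge1$; at $t=2-a-b$ the inequality simplifies to $v\big((2-b)^2-ab\big)\ge2(2-a-b)(2-b)$. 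This latter (antipodal) endpoint is sharp precisely when $a=b$, reproducing the equality $u_{\Bn}(x,-x)=\rho_{\Bn}(x,-x)$ noted above, so it must be verified by exact algebra rather than crude estimation.

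For the upper bound $u_{\Bn}\le2\rho_{\Bn}$ I would use $u_{\Bn}\le2\rho_{\Bn}\iff e^{u_{\Bn}/2}\le e^{\rho_{\Bn}}$, that is $(t+a)/w\le\big(t+\sqrt{abv^2+t^2}\big)^2/(abv^2)$. Isolating the single square root and squaring (a step that is vacuous when its left side is nonpositive, in which case the bound is immediate), the quartic terms cancel and the inequality collapses to the cubic
\[ F(t):=4t^2(t+a)-wv^2\,(t+a-w)^2\ge0. \]
The key structural fact is $wv^2=\sqrt{ab}\,(2-a)(2-b)\le4a$, valid since $\sqrt{ab}\le a$ and $(2-a)(2-b)\le4$; this forces $F''(t)=24t+2(4a-wv^2)\ge0$, so $F$ is convex on $[a-b,2-a-b]$. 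Hence $F\ge0$ on the interval follows from the two explicit endpoint inequalities $F(a-b)\ge0$ and $F'(a-b)\ge0$.

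I expect the upper bound to be the main obstacle. Both $F(a-b)\ge0$ and $F'(a-b)\ge0$ degenerate to equalities in the limit $a=b$, $t\to0^+$ (where $u_{\Bn}/\rho_{\Bn}\to2$), so the bound is sharp there and no slack is available: the constraint $t\ge a-b$ must be used in full, and these finitely many boundary inequalities (together with the sharp antipodal endpoint of the lower bound) require honest two-variable algebra rather than convexity or monotonicity shortcuts. Once they are established, combining the two parts yields $\rho_{\Bn}(x,y)\le u_{\Bn}(x,y)\le2\rho_{\Bn}(x,y)$.
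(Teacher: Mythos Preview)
The paper states this as a \emph{conjecture}, not a theorem; it offers no proof beyond the motivating observation $u_{\Bn}(x,-x)=\rho_{\Bn}(x,-x)$ and the weaker bounds of Theorem~\ref{thm-rho-u}. Your proposal is a reduction, not a proof: you correctly translate each inequality into a one-variable problem in $t$, use concavity or convexity to reduce to endpoint checks, and then explicitly leave those endpoint inequalities unverified (``honest two-variable algebra''). That is exactly where the argument must fail, because the upper bound $u_{\Bn}\le 2\rho_{\Bn}$ is in fact \emph{false}.

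Take $x=\tfrac12 e_1$ and $y=\tfrac35 e_1$. Then $d(x)=\tfrac12$, $d(y)=\tfrac25$, $|x-y|=\tfrac{1}{10}$, so
\[
e^{u_{\Bn}(x,y)}=\left(\frac{3/5}{\sqrt{1/5}}\right)^{2}=\frac{9}{5},
\qquad
e^{2\rho_{\Bn}(x,y)}=\left(\frac{4}{3}\right)^{2}=\frac{16}{9},
\]
the second from $\rho_{\Bn}(x,y)=\log 4-\log 3$ for collinear points on a radius. Hence $u_{\Bn}(x,y)-2\rho_{\Bn}(x,y)=\log(81/80)>0$. In your notation this is $a=\tfrac12$, $b=\tfrac25$, $t=a-b=\tfrac{1}{10}$, and one computes $F(a-b)=(375-168\sqrt5)/625<0$ (indeed $168^2\cdot 5=141120>140625=375^2$). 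So the inequality $F(a-b)\ge 0$ that you single out as ``the main obstacle'' is not merely delicate---it does not hold. The lower bound $\rho_{\Bn}\le u_{\Bn}$ may well survive, and your reduction of it to the antipodal endpoint inequality $v\big((2-b)^2-ab\big)\ge 2(2-a-b)(2-b)$ is correct, but your proposal does not establish that inequality either.
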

As a consequence of Theorem~\ref{thm-rho-u} we have the following inclusion relation.
\begin{corollary}
Let $x\in \Bn$ and $t>0$. Then
$$B_{\rho_{\Bn}}(x,r)\subseteq B_{u_{\Bn}}(x,t)\subseteq B_{\rho_{\Bn}}(x,R),
$$
where $r=t/4$ and $R=2t$.
\end{corollary}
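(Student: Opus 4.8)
The plan is to obtain both inclusions as immediate formal consequences of the two-sided comparison established in Theorem~\ref{thm-rho-u}, namely $\tfrac{1}{2}\rho_{\Bn}(x,y)\le u_{\Bn}(x,y)\le 4\rho_{\Bn}(x,y)$ for all $x,y\in\Bn$. The guiding principle is that, for metric balls sharing a common centre $x$, membership in such a ball is governed solely by a one-sided pointwise comparison between the two metrics; hence each half of the theorem delivers exactly one of the two required inclusions, and the radii $r=t/4$ and $R=2t$ are read off directly from the constants $4$ and $\tfrac12$ appearing there.

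For the first inclusion $B_{\rho_{\Bn}}(x,r)\subseteq B_{u_{\Bn}}(x,t)$ with $r=t/4$, I would take an arbitrary $z\in B_{\rho_{\Bn}}(x,r)$, so that $\rho_{\Bn}(x,z)<t/4$, and invoke the upper bound $u_{\Bn}(x,z)\le 4\rho_{\Bn}(x,z)$ to conclude $u_{\Bn}(x,z)<t$, that is, $z\in B_{u_{\Bn}}(x,t)$. For the second inclusion $B_{u_{\Bn}}(x,t)\subseteq B_{\rho_{\Bn}}(x,R)$ with $R=2t$, I would take $z\in B_{u_{\Bn}}(x,t)$, so that $u_{\Bn}(x,z)<t$, and invoke the lower bound $\tfrac{1}{2}\rho_{\Bn}(x,z)\le u_{\Bn}(x,z)$ to obtain $\rho_{\Bn}(x,z)<2t=R$, that is, $z\in B_{\rho_{\Bn}}(x,R)$.

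There is no genuine obstacle here, since the statement is purely a restatement of the quasi-isometric comparison at the level of metric balls rather than of individual points; the only care needed is to pair each inequality in Theorem~\ref{thm-rho-u} with the correct direction of inclusion and to scale the radius by the corresponding constant. It is worth remarking that these radii need not be sharp: should the sharper bounds $\rho_{\Bn}\le u_{\Bn}\le 2\rho_{\Bn}$ of the conjecture hold, the very same argument would immediately yield the tighter inclusions with $r=t/2$ and $R=t$.
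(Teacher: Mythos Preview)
Your proposal is correct and matches the paper's own approach exactly: the paper simply states that the proof follows directly from Theorem~\ref{thm-rho-u}, and your argument spells out precisely how each inequality there yields the corresponding inclusion with the radii $r=t/4$ and $R=2t$.
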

\begin{proof}
The proof follows directly from Theorem~\ref{thm-rho-u}.
\end{proof}

Next theorem shows that the $u_{\Bn}$-metric and the $\rho_{\Bn}$-metric are satisfying
the quasi-isometry property.
\begin{theorem}
For all $x,y\in \Bn$, we have 
$$
\rho_{\mathbb{B}^n}(x,y)-2\log 2\le u_{\mathbb{B}^n}(x,y)\le 2\rho_{\mathbb{B}^n}(x,y)+2\log 2.
$$
\end{theorem}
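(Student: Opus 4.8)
The plan is to prove the two inequalities separately, since they draw on different inputs. Throughout I abbreviate $a=d(x)$, $b=d(y)$, $t=|x-y|$, and by the symmetry of $u_{\Bn}$, $\rho_{\Bn}$ and $\tilde{j}_{\Bn}$ in $x,y$ I may assume $a\ge b$. Two formulas drive everything: the explicit shape $u_{\Bn}(x,y)=2\log\frac{t+a}{\sqrt{ab}}$, and the inverted form of \eqref{def-rho}, namely $\rho_{\Bn}(x,y)=2\arsh\frac{t}{D}$ where $D:=\sqrt{(1-|x|^2)(1-|y|^2)}$. The one algebraic observation that unlocks both bounds is that $1-|x|^2=a(2-a)$ and $1-|y|^2=b(2-b)$, so $D=\sqrt{ab}\,\sqrt{(2-a)(2-b)}$ with $\sqrt{(2-a)(2-b)}\in[1,2)$ because $a,b\in(0,1]$.

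For the upper bound $u_{\Bn}\le 2\rho_{\Bn}+2\log 2$ I would route through the distance ratio metric: by \eqref{jrho} one has $\tilde{j}_{\Bn}\le\rho_{\Bn}$, so it suffices to prove the stronger inequality $u_{\Bn}\le 2\tilde{j}_{\Bn}+2\log 2$. Expanding the definitions gives $u_{\Bn}(x,y)-2\tilde{j}_{\Bn}(x,y)=2\log\!\bigl(\tfrac{t+a}{t+b}\sqrt{b/a}\bigr)$, so the claim reduces to bounding the bracketed factor by $2$. Now $\sqrt{b/a}\le 1$, and since $t=|x-y|\ge\big|\,|x|-|y|\,\big|=a-b$ we get $t+b\ge a$, hence $\frac{t+a}{t+b}=1+\frac{a-b}{t+b}\le 1+\frac{a-b}{a}<2$; the product of the two estimates is below $2$, as needed.

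For the lower bound $\rho_{\Bn}-2\log 2\le u_{\Bn}$ this shortcut collapses: the difference $2\tilde{j}_{\Bn}-u_{\Bn}$ is unbounded as $b/a\to 0$ (one point near the centre, the other near the boundary), so $\rho_{\Bn}\le 2\tilde{j}_{\Bn}$ cannot be chained against any additive comparison with $u_{\Bn}$. Instead I would argue directly from the closed forms. Writing $\tfrac{\rho_{\Bn}}{2}=\log\frac{t+\sqrt{t^2+D^2}}{D}$ and cancelling the common factor $\sqrt{ab}$, the target inequality $\rho_{\Bn}\le u_{\Bn}+2\log 2$ becomes $\frac{t+\sqrt{t^2+D^2}}{\sqrt{(2-a)(2-b)}}\le 2(t+a)$. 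Since $\sqrt{(2-a)(2-b)}\ge 1$, it is enough to show $t+\sqrt{t^2+D^2}\le 2(t+a)$, i.e. $\sqrt{t^2+D^2}\le t+2a$, i.e. $D^2\le 4a(t+a)$; and this is immediate from $D^2=ab(2-a)(2-b)\le 4ab\le 4a^2\le 4a(t+a)$, using $(2-a)(2-b)\le 4$, then $b\le a$, then $t\ge 0$.

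I expect the lower bound to be the real obstacle. The comparison estimates already recorded in \eqref{jrho} and Lemma~\ref{lem-u-j} are purely multiplicative and therefore cannot deliver the sharp additive constant $2\log 2$; one is forced back onto the exact expressions for both metrics. The technical crux is the identity $1-|x|^2=a(2-a)$, which simultaneously produces the factor $\sqrt{ab}=\sqrt{d(x)d(y)}$ that $u_{\Bn}$ also carries and pins the residual factor $\sqrt{(2-a)(2-b)}$ inside $[1,2)$; once this normalisation is in place, each bound reduces to a one-line elementary estimate.
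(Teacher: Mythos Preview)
Your proof is correct. For the upper bound you follow exactly the paper's route: the paper cites \cite[Theorem~3.1]{Ibr11} for the general inequality $u_D\le 2\tilde{j}_D+2\log 2$ and then applies $\tilde{j}_{\Bn}\le\rho_{\Bn}$ from \eqref{jrho}; you simply reprove the cited inequality from scratch (and your argument, using only $|x-y|\ge |d(x)-d(y)|$, indeed works in any domain).

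For the lower bound your argument is correct but differs from the paper's. The paper does not expand $\arsh$; instead it observes in one line that
\[
\frac{t+a}{\sqrt{ab}}=\frac{t}{\sqrt{ab}}+\sqrt{a/b}\ge \frac{t}{D}+1=1+\sinh\!\Big(\frac{\rho_{\Bn}}{2}\Big),
\]
using $D\ge\sqrt{ab}$ and $a\ge b$, and then finishes with the elementary inequality $1+\sinh r\ge e^{r}/2$. Your approach writes $\rho_{\Bn}=2\log\frac{t+\sqrt{t^2+D^2}}{D}$ and reduces to $D^2\le 4a(t+a)$, which you dispatch via $D^2\le 4ab\le 4a^2$. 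Both are short; the paper's version has the advantage that the $2\log 2$ constant is visibly produced by the single estimate $1+\sinh r\ge e^r/2$, whereas in your chain the constant emerges from combining $\sqrt{(2-a)(2-b)}\ge 1$ with the squaring step. Your version, on the other hand, makes the role of the factorisation $1-|x|^2=a(2-a)$ more explicit and avoids any appeal to hyperbolic identities beyond the definition.
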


\begin{proof}
The right hand side inequality easily follows from \eqref{jrho} and \cite[Theorem~3.1]{Ibr11}.
For the left hand side inequality we assume that $x,y\in \Bn$ with $|x|\le |y|$. 
It is clear that $(1-|x|)(1-|y|)\le (1-|x|^2)(1-|y|^2)$. Now with the help of the formula \eqref{def-rho} we have
\begin{eqnarray*}
u_{\Bn} (x,y) &=& 2\log \left(\frac{|x-y|+1-|x|}{\sqrt{(1-|x|)(1-|y|)}}\right)
\ge  2\log \left(1+\frac{|x-y|}{\sqrt{(1-|x|^2)(1-|y|^2)}}\right)\\
&=& 2\log \left(1+\sinh\left(\frac{\rho_{\Bn}(x,y)}{2}\right)\right)
\ge \rho_{\mathbb{B}^n}(x,y)-2\log 2,\\
\end{eqnarray*} 
where the first inequality follows from the fact that $(1-|x|)/(1-|y|)\ge 1$ and the second inequality
follows from the fact that $1+\sinh(r)\ge e^r/2, r\ge 0$. The proof is complete.
\end{proof}

Now, we compare the $u_{\mathbb{H}^n}$-metric with the $\rho_{\mathbb{H}^{n}}$-metric. Note that for $x,y\in \mathbb{H}^{n}$, the $\rho_{\mathbb{H}^{n}}$-metric can be computed by the formula (see \cite[p.~35]{Bea95})
\begin{equation}\label{rho-hn}
2\sinh\left(\frac{\rho_{\mathbb{H}^{n}}(x,y)}{2}\right)=\frac{|x-y|}{\sqrt{x_n y_n}}.
\end{equation}

\begin{theorem}
For $x,y\in \mathbb{H}^n$ we have
$$\rho_{\mathbb{H}^n}(x,y)\le u_{\mathbb{H}^n}(x,y).
$$
The inequality is sharp.
\end{theorem}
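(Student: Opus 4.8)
The plan is to convert both quantities into explicit logarithmic form and reduce the claimed inequality to an elementary algebraic one. Setting $D=\mathbb{H}^n$ we have $d(x)=x_n$, so by definition
\[
u_{\mathbb{H}^n}(x,y)=2\log\frac{|x-y|+\max\{x_n,y_n\}}{\sqrt{x_n y_n}}.
\]
On the other hand, applying the identity $e^{r}=\cosh r+\sinh r$ at $r=\rho_{\mathbb{H}^n}(x,y)/2$ to \eqref{rho-hn} yields the closed form
\[
\rho_{\mathbb{H}^n}(x,y)=2\log\frac{|x-y|+\sqrt{|x-y|^2+4x_n y_n}}{2\sqrt{x_n y_n}}.
\]
To streamline the comparison I would introduce the normalized quantities $A=|x-y|/\sqrt{x_n y_n}$ and $M=\max\{x_n,y_n\}/\sqrt{x_n y_n}$, noting at once that $M\ge 1$, since the maximum of two positive numbers dominates their geometric mean, with equality precisely when $x_n=y_n$. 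This substitution has the advantage of handling the two cases $x_n\ge y_n$ and $x_n\le y_n$ simultaneously, so no separate ``without loss of generality'' reduction is needed.

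In these terms the desired inequality $\rho_{\mathbb{H}^n}(x,y)\le u_{\mathbb{H}^n}(x,y)$ is equivalent to $\tfrac{1}{2}\bigl(A+\sqrt{A^2+4}\bigr)\le A+M$, i.e. to $\sqrt{A^2+4}\le A+2M$. Since both sides are nonnegative I would square, and after cancelling $A^2$ this reduces to $1\le M(A+M)$. This last inequality is immediate: because $M\ge 1$ and $A\ge 0$ we have $M(A+M)\ge M^2\ge 1$. Thus the inequality holds for all $x,y\in\mathbb{H}^n$. There is no genuine obstacle here; the only points requiring care are verifying that $e^{\rho/2}$ has the stated closed form and that the squaring step is an equivalence (legitimate because both sides are nonnegative).

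For sharpness I would exhibit a family of points along which the ratio $\rho_{\mathbb{H}^n}/u_{\mathbb{H}^n}$ tends to $1$. Taking $x$ and $y$ at a common height, so that $M=1$, and letting the horizontal separation grow, i.e. fixing $x_n=y_n$ and sending $A=|x-y|/x_n\to\infty$, one has $u_{\mathbb{H}^n}(x,y)=2\log(A+1)$ and $\rho_{\mathbb{H}^n}(x,y)=2\log\tfrac{A+\sqrt{A^2+4}}{2}$, both asymptotic to $2\log A$. Hence $\rho_{\mathbb{H}^n}(x,y)/u_{\mathbb{H}^n}(x,y)\to 1$, which shows the constant $1$ cannot be enlarged and the inequality is sharp. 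The only mildly delicate part of the whole argument is this asymptotic bookkeeping; the main inequality itself is a one-line algebraic verification.
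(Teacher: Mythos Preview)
Your proof is correct. The main inequality argument is essentially the same as the paper's, just repackaged algebraically: the paper assumes $x_n\ge y_n$ and uses the hyperbolic-trig identity $1+2\sinh r\ge e^r$ to conclude $\frac{|x-y|+x_n}{\sqrt{x_n y_n}}\ge 2\sinh(\rho/2)+1\ge e^{\rho/2}$, whereas you normalize via $A,M$, write out $e^{\rho/2}=(A+\sqrt{A^2+4})/2$ explicitly, and square. Unwinding either computation lands on the same elementary fact ($0\le A$, or equivalently $M(A+M)\ge 1$); your symmetric parametrization with $M=\max\{x_n,y_n\}/\sqrt{x_n y_n}$ is a clean way to avoid the WLOG step.

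Where you genuinely diverge is in the sharpness witness. The paper takes a \emph{vertical} family $x=te_2$, $y=(1/t)e_2$ (so $x_n\ne y_n$ and $M\to\infty$) and shows $\rho/u\to 1$; you instead take a \emph{horizontal} family at a common height ($M=1$, $A\to\infty$). Both are valid, and your choice is arguably more natural given the equality analysis: when $M=1$ the reduced inequality $1\le M(A+M)$ is already saturated at $A=0$, so one expects near-equality for all $A$ along that slice.
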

\begin{proof}
Suppose that $x,y\in \mathbb{H}^n$. Without loss of generality we assume that $x_n\ge y_n$. Now,
\begin{eqnarray*}
u_{\mathbb{H}^n}=2\log\left(\frac{|x-y|+x_n}{\sqrt{x_n y_n}}\right) \ge 2\log\left(2\sinh\left(\frac{\rho_{\mathbb{H}^n}(x,y)}{2}\right)+1\right)\ge \rho_{\mathbb{H}^n}(x,y), 
\end{eqnarray*}
where the first inequality follows from \eqref{rho-hn} and the hypothesis. However, the second inequality follows from the fact that
$1+2\sinh (r)=1+e^r-e^{-r}\ge e^r$ for $r\ge 0$. For sharpness, consider the points $x=te_2$ and $y=(1/t)e_2$ with $t>1$. Then
$$\rho_{\mathbb{H}^n}(te_2,(1/t)e_2)=2\sinh^{-1} \left(\frac{t^2-1}{2t}\right)=2\log t \mbox{ and }
u_{\mathbb{H}^n}(te_2,(1/t)e_2)=2\log\left(\frac{2t^2-1}{t}\right).
$$
Now taking the limits as $t\to \infty$ we get
$$\lim_{t\to \infty} \frac{\rho_{\mathbb{H}^n}(te_2,(1/t)e_2)}{u_{\mathbb{H}^n}(te_2,(1/t)e_2)}=\lim_{t\to
\infty} \frac{\log t}{\log\left(\ds\frac{2t^2-1}{t}\right)}=1.
$$
Hence completing the proof.
\end{proof}

\section{Comparison of the $u_D$-metric and the $\tilde{\tau}_D$-metric}\label{u-tildetau}
We begin this section with the proof of the comparisons stated in Table~\ref{T1}.
\begin{center}
\begin{table}[h!]
\begin{tabular}{ | c | c | c | }
\hline
& Comparison with $\tilde{\tau}_D$ & Comparison with $u_D$\\[1mm]
\hline
&&\\[-3mm]
& $\frac{1}{2}j_D\le \tilde{\tau}_D\le j_D$  & $2j_D\le u_D\le 4j_D$\\[1mm]
$j_D$ & \cite[Theorems~5.1,5.4]{Ibr16} & [Lemma~\ref{lem-u-j}]\\[1mm] 
&(Right hand side inequality is sharp)&(Left hand side inequality is sharp)\\
\hline
&&\\[-3mm]
& $\frac{1}{2}\tilde{j}_D\le \tilde{\tau}_D\le \tilde{j}_D$ & $\tilde{j}_D\le u_D\le 2\tilde{j}_D$\\[1mm]
$\tilde{j}_D$ & \cite[Lemma~4.1, Theorem~4.3]{Ibr16}& [Theorem~\ref{lem-tildej-u}]\\[1mm]
&(Both the inequalities are sharp)&(Left hand side inequality is sharp)\\
\hline
&&\\[-3mm]
&  & $2\tilde{\tau}_D\le u_D\le 4\tilde{\tau}_D$\\[1mm]
$\tilde{\tau}_D$ & - & [Theorem~\ref{tau-u-g}]\\[1mm]
&& (Both the inequalities are sharp)\\[1mm]
\hline
\end{tabular}
\vspace*{0.5cm}
\caption{Comparison of $\tilde{\tau}_D$-metric and $u_D$-metric with other hyperbolic-type metrics}\label{T1}
\end{table}
\end{center}

First, we compare the $u_D$-metric with the $\tilde{\tau}_D$-metric in arbitrary domains $D\subsetneq \Rn$. Ibragimov in \cite{Ibr16}, proved that the $\tilde{\tau}_D$-metric is Gromov-hyperbolic ($\eta$-hyperbolic) with the constant $\eta=\log 3$ by comparing this with the $\tilde{j}_D$-metric. 
Comparison of the $u_D$-metric and the $\tilde{\tau}_D$-metric leads to an improvement in the constant of Gromov hyperbolicity of the $\tilde{\tau}_D$-metric from $\log 3$ to $\log 2$. In light of \cite[Theorem~3.1]{Ibr11} and \cite[Theorem~3.7]{Ibr16},
it can easily be seen that
$$u_D(x,y)\leq 4\tilde\tau_{D}(x,y)+2\log 2.
$$
holds for $x,y\in D\subsetneq \Rn$; however, this has been improved in Theorem~\ref{tau-u-g}.

Next theorem proves the comparison of both the $\tilde{\tau}_D$-metric and $u_D$-metric in the other way, i.e., the $\tilde{\tau}_D$-metric as a lower bound to the $u_D$-metric.
\begin{theorem}\label{tildetau-u}
Let $D\subsetneq \Rn$ be any domain with $\partial D\neq \emptyset$ and $x,y\in D$. Then 
$$ 2\tilde{\tau}_D(x,y)\le u_D(x,y).
$$
Equality holds whenever $d(x)=|x-p|=|y-p|=d(y)$ for some $p\in \partial D$. Moreover, there exists no
constant $k\ge 0$ such that
$$ u_D(x,y)\le 2\tilde{\tau}_D(x,y)+k
$$
for all $x,y\in D$ unless $D$ is a once-punctured space. If $D$ is a once-punctured space, then
$$ u_D(x,y)\le 2\tilde{\tau}_D(x,y)+ 2\log 2
$$
and the inequality is sharp.
\end{theorem}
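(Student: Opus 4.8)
The plan is to prove the lower bound and its equality case first, then the obstruction to any reverse inequality, and finally the once-punctured estimate with its sharpness.

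First I would establish $2\tilde{\tau}_D(x,y)\le u_D(x,y)$. By symmetry I may assume $d(x)\ge d(y)$, so that $u_D(x,y)=2\log\bigl((|x-y|+d(x))/\sqrt{d(x)d(y)}\bigr)$; after exponentiating and taking square roots the claim reduces to
$$1+\sup_{p\in\partial D}\frac{|x-y|}{\sqrt{|x-p|\,|p-y|}}\le \frac{|x-y|+d(x)}{\sqrt{d(x)d(y)}}.$$
Since $|x-p|\ge d(x)$ and $|p-y|\ge d(y)$ for every $p\in\partial D$, the supremum is at most $|x-y|/\sqrt{d(x)d(y)}$, and the remaining step is the elementary inequality $\sqrt{d(x)d(y)}\le d(x)$. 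For the equality clause I would record when both estimates are simultaneously tight: the second forces $d(x)=d(y)$, while the first forces the existence of a single $p\in\partial D$ realizing $|x-p|=d(x)$ and $|p-y|=d(y)$ at once; together these read $d(x)=|x-p|=|y-p|=d(y)$, as claimed.

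The heart of the theorem, and the step I expect to be the main obstacle, is showing that no additive constant $k$ can reverse the inequality unless $D$ is once-punctured. If $D$ is not once-punctured then $\partial D$ contains two distinct points $p,q$ with $|p-q|=L>0$, and I would take sequences $x_k\to p$, $y_k\to q$ in $D$, so that $d(x_k),d(y_k)\to 0$ while $|x_k-y_k|\to L$. The subtlety is that $\tilde{\tau}_D$ must be controlled uniformly over the whole boundary, not merely over $p$ and $q$: for an arbitrary $p'\in\partial D$ the triangle inequality gives $\max\{|x_k-p'|,|p'-y_k|\}\ge |x_k-y_k|/2$, which combined with $|x_k-p'|\ge d(x_k)$ and $|p'-y_k|\ge d(y_k)$ yields
$$\frac{|x_k-y_k|}{\sqrt{|x_k-p'|\,|p'-y_k|}}\le \sqrt{\frac{2|x_k-y_k|}{d(x_k)\wedge d(y_k)}}.$$
Hence $u_D(x_k,y_k)\sim 2\log\bigl(L/\sqrt{d(x_k)d(y_k)}\bigr)$ diverges at roughly twice the rate of $2\tilde{\tau}_D(x_k,y_k)$, so $u_D-2\tilde{\tau}_D\to+\infty$ and no such $k$ can exist.

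Finally, when $D=\Rn\setminus\{p_0\}$ is once-punctured the supremum degenerates to a single term, so writing $a=|x-p_0|\ge b=|y-p_0|$ and $c=|x-y|$ gives the exact identity $u_D-2\tilde{\tau}_D=2\log\bigl((c+a)/(\sqrt{ab}+c)\bigr)$. I would view the right side as a function of $c$ on the admissible range $a-b\le c\le a+b$; because $\sqrt{ab}\le a$ it is nonincreasing in $c$, so its maximum is attained at the collinear configuration $c=a-b$. Substituting $a=t^2b$ reduces the quantity to $2\log\bigl((2t^2-1)/(t^2+t-1)\bigr)$, which stays strictly below $2\log 2$ and tends to $2\log 2$ as $t\to\infty$. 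This gives both the bound $u_D\le 2\tilde{\tau}_D+2\log 2$ and, via the collinear points $p_0=0$, $y=e_1$, $x=t^2e_1$, its sharpness.
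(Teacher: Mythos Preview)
Your argument is correct in all three parts. Parts~1 and~3 track the paper's proof closely: the inequality $2\tilde{\tau}_D\le u_D$ is obtained via the same two-step estimate $|x-p||p-y|\ge d(x)d(y)$ and $\sqrt{d(x)/d(y)}\ge 1$, and for the once-punctured bound the paper writes the claim as $(|x|-|x-y|)/\sqrt{|x||y|}\le 2$ and dispatches it in one line via $|x|-|x-y|\le |y|$, while you instead view $(c+a)/(\sqrt{ab}+c)$ as monotone in $c$ and optimize at $c=a-b$; the sharpness example $x=te_1$, $y=e_1$ is the same up to the substitution $t\mapsto t^2$.

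The genuine difference is in the middle part. The paper does \emph{not} construct explicit sequences; it argues by contradiction: if $2\tilde{\tau}_D\le u_D\le 2\tilde{\tau}_D+k$ held, then since $u_D$ is Gromov $\delta$-hyperbolic the metric $\tilde{\tau}_D$ would inherit a hyperbolicity property that contradicts a remark in~\cite{Ibr16}. Your approach is direct and self-contained: picking $x_k\to p$ and $y_k\to q$ for distinct $p,q\in\partial D$ and using the uniform boundary estimate $|x_k-p'||p'-y_k|\ge \tfrac{1}{2}|x_k-y_k|\,(d(x_k)\wedge d(y_k))$ to control $\tilde{\tau}_D$ from above, you show $u_D-2\tilde{\tau}_D\to+\infty$ without invoking Gromov hyperbolicity or any external reference. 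This buys you an elementary, quantitative argument at the cost of a small computation; the paper's route is shorter on the page but depends on results proved elsewhere.
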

\begin{proof}
Without loss of generality we assume that $d(x)\ge d(y)$. For $x,y\in D$, the relation 
$|x-p||y-p|\ge d(x)d(y)$ clearly holds for all $p\in \partial D$. Then we have
\begin{eqnarray*}
u_D(x,y) &=& 2 \log \frac{|x-y|+d(x)}{\sqrt{d(x)d(y)}}
\ge  2\log \left(1+\frac{|x-y|}{\sqrt{d(x)d(y)}}\right)\\
&\ge &2 \log\left(1+\sup_{p\in \partial D}\frac{|x-y|}{\sqrt{|x-p||p-y|}}\right)=2 \tilde{\tau}_D(x,y),\\
\end{eqnarray*}
where the first inequality follows from the fact that $d(x)/d(y)\ge 1$. It is clear that if  $d(x)=|x-p|=|y-p|=d(y)$ for some $p\in \partial D$, then both the above inequalities turn into an equality and hence the sharpness part is proved.

To prove the second part, suppose that $D$ has more than one boundary point and $k\ge 0$
such that $u_D(x,y)\le 2\tilde{\tau}_D(x,y)+k$ for all $x,y\in D$. Since $u_D$ is $\delta$-hyperbolic
in $D$ and  $2\tilde{\tau}_D(x,y)\le u_D(x,y)\le 2\tilde{\tau}_D(x,y)+k$, we conclude that $\tilde{\tau}_D$
is $\delta$-hyperbolic in $D$, contradicting \cite[Remark~4.4]{Ibr16}.

The translation invariance of the $\tilde{\tau}_D$-metric and the $u_D$-metric 
allows us to take the punctured space to be $D_0$ without any loss to 
generality. Again we assume that $|x|\ge |y|$. To show the third part, it is sufficient to
show 
$$ \frac{|x-y|+|x|}{\sqrt{|x||y|}}\le 2 \left(1+\frac{|x-y|}{\sqrt{|x||y|}}\right),
$$
or, equivalently, 
$$ \frac{|x|-|x-y|}{\sqrt{|x||y|}}\le 2.
$$
The hypothesis $|x|\ge |y|$ along with the triangle inequality yields
$$ \frac{|x|-|x-y|}{\sqrt{|x||y|}}\le \frac{|y|}{\sqrt{|x||y|}}\le 2.
$$
To prove the sharpness, let $y=e_1$ and $x=te_1, t>1$. Then
$$\lim_{t\to \infty} u_D(x,y)-2\tilde{\tau}_D(x,y)=\lim_{t\to \infty} 2\log \frac{2t-1}{t+\sqrt{t}-1}=2\log 2.
$$
Hence the proof is complete.
\end{proof}

The following inclusion relation holds true between the $u_D$-metric ball and the $\tilde{\tau}_D$-metric ball.
\begin{corollary}
Let $D\subsetneq \Rn$ be any arbitrary domain and $x\in D$. Then
$$B_{u_D}(x,r)\subseteq B_{\tilde{\tau}_D}(x,t),
$$
where $r=2t$ and the inclusion is sharp.
\end{corollary}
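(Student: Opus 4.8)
The plan is to read the inclusion off the lower bound $2\tilde{\tau}_D\le u_D$ from Theorem~\ref{tildetau-u}, and then to certify sharpness by revisiting the very configuration that forces equality there. No new estimate is required: the whole argument is a translation of a pointwise inequality into a containment of balls, followed by an equality case witnessing optimality.

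For the inclusion itself I would rewrite the bound of Theorem~\ref{tildetau-u} as $\tilde{\tau}_D(x,z)\le \tfrac12 u_D(x,z)$ for every $z\in D$. Fixing $t>0$ and setting $r=2t$, any $z\in B_{u_D}(x,r)$ satisfies $u_D(x,z)<2t$, whence $\tilde{\tau}_D(x,z)\le \tfrac12 u_D(x,z)<t$, that is, $z\in B_{\tilde{\tau}_D}(x,t)$. This monotonicity step is routine and is not where the difficulty lies.

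The substantive point is sharpness, which I read as the assertion that the constant relating the two radii cannot be improved: neither may $r=2t$ be enlarged for fixed $t$, nor the target radius be shrunk for fixed $r$. Here I would return to the equality locus of Theorem~\ref{tildetau-u}, where $u_D=2\tilde{\tau}_D$ exactly when $d(x)=|x-p|=|y-p|=d(y)$ for some $p\in\partial D$. The cleanest realization is the once-punctured space $D_0=\Rn\setminus\{0\}$, where $d(w)=|w|$ and $\partial D=\{0\}$, so that
$$\tilde{\tau}_{D_0}(x,z)=\log\!\left(1+\frac{|x-z|}{\sqrt{|x|\,|z|}}\right),\qquad u_{D_0}(x,z)=2\log\frac{|x-z|+\max\{|x|,|z|\}}{\sqrt{|x|\,|z|}}.$$
Restricting $z$ to the sphere $|z|=|x|$ makes $\max\{|x|,|z|\}=|x|$ and collapses both formulas to the common value $u_{D_0}(x,z)=2\tilde{\tau}_{D_0}(x,z)$. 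Consequently the two balls cut out exactly the same subset of this sphere: $u_{D_0}(x,z)<2t\Longleftrightarrow\tilde{\tau}_{D_0}(x,z)<t$.

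From this equality I would conclude sharpness as follows. As $z$ runs over the sphere $|z|=|x|$, the quantity $|x-z|$ sweeps the interval $(0,2|x|]$ by continuity, so $\tilde{\tau}_{D_0}(x,z)$ attains every value in $(0,\log 3]$. Hence, for $t\in(0,\log 3)$ and any $t'<t$, one may pick $z$ on the sphere with $t'<\tilde{\tau}_{D_0}(x,z)<t$; then $u_{D_0}(x,z)=2\tilde{\tau}_{D_0}(x,z)<2t$ places $z$ inside $B_{u_{D_0}}(x,2t)$, while $\tilde{\tau}_{D_0}(x,z)>t'$ keeps it outside $B_{\tilde{\tau}_{D_0}}(x,t')$, so the inclusion fails the instant the target radius drops below $t$. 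The symmetric choice $t<\tilde{\tau}_{D_0}(x,z)<r'/2$ for $r'>2t$ shows the source radius cannot be enlarged beyond $2t$ either. I anticipate no genuine obstacle here; the only care needed is the elementary verification that $\tilde{\tau}_{D_0}(x,\cdot)$ really takes all intermediate values on the sphere, which is immediate from the continuity of $z\mapsto|x-z|$.
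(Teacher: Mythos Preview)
Your proof is correct and follows essentially the same approach as the paper: the inclusion is read off directly from the inequality $2\tilde{\tau}_D\le u_D$ of Theorem~\ref{tildetau-u}, and sharpness is witnessed in the once-punctured space $D_0=\Rn\setminus\{0\}$ by taking points on the sphere $\partial B(0,|x|)$, where $u_{D_0}=2\tilde{\tau}_{D_0}$. The paper phrases the sharpness more tersely by simply picking $y\in\partial B_{u_{D_0}}(x,2t)\cap\partial B(0,|x|)$ and noting $u_{D_0}(x,y)=2t\Rightarrow\tilde{\tau}_{D_0}(x,y)=t$, while you spell out the intermediate-value reasoning; both arguments share the same implicit restriction $t\le\log 3$ for the sphere to meet the relevant level set.
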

\begin{proof}
Suppose that $y\in B_{u_D}(x,2t)$. Then $u_D(x,y)<2t$. Now it follows from Theorem~\ref{tildetau-u} that $\tilde{\tau}_D(x,y)<t$ and hence the proof is complete. For the sharpness, consider the domain $D=\Rn\setminus\{0\}$ and $x\in D$. Choose the point $y=\partial B_{u_D}(x,2t)\cap \partial B(0,|x|)$. Now,
$u_D(x,y)=2t$ implies $\tilde{\tau}_D(x,y)=t$. This proves our corollary.
\end{proof}
Next, we aim to prove the other way of comparison. That is, to find a constant $k$ such that 
$u_D\le k \tilde{\tau}_D$. First, we prove this result in once-punctured spaces and then we extend this to arbitrary proper subdomains of $\Rn$. Next result shows that in punctured spaces the constant $k=4$.

\begin{lemma}\label{u-tau-d0}
Let $x,y\in D_0$. Then 
$$u_{D_0}(x,y)\le 4\tilde{\tau}_{D_0}(x,y).
$$
\end{lemma}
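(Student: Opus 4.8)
The plan is to specialize both metrics to $D_0=\Rn\setminus\{0\}$. Since $\partial D_0=\{0\}$, we have $d(x)=|x|$ and $d(y)=|y|$, and the supremum defining $\tilde{\tau}_{D_0}$ collapses to the single term at $p=0$. This yields the explicit formulas
$$u_{D_0}(x,y) = 2\log\frac{|x-y| + \max\{|x|,|y|\}}{\sqrt{|x||y|}}, \qquad \tilde{\tau}_{D_0}(x,y) = \log\left(1 + \frac{|x-y|}{\sqrt{|x||y|}}\right).$$
After dividing the claimed inequality by $2$ and exponentiating, the statement reduces to the purely algebraic inequality
$$\frac{|x-y| + \max\{|x|,|y|\}}{\sqrt{|x||y|}} \le \left(1 + \frac{|x-y|}{\sqrt{|x||y|}}\right)^2.$$

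First I would assume without loss of generality that $|x|\ge |y|$, so that $\max\{|x|,|y|\}=|x|$, and introduce the abbreviations $a=|x|$, $b=|y|$, $c=|x-y|$, and $s=\sqrt{ab}$, with $a\ge b>0$. Clearing denominators by multiplying through by $s^2=ab$ and cancelling the common term $sc$ on both sides transforms the target into the single inequality
$$s(a-c) \le ab + c^2.$$

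The key step is to feed in the triangle inequality in the form $|x|-|x-y|\le |y|$, that is $a-c\le b$. If $a-c\le 0$ the left-hand side is nonpositive and there is nothing to prove. Otherwise, using $a-c\le b$ gives $s(a-c)\le sb=\sqrt{ab}\,b$, and since $b\le a$ we have $\sqrt{ab}\le a$, whence $sb\le ab\le ab+c^2$. This closes the estimate and proves the lemma.

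I do not anticipate a serious obstacle: the entire difficulty is bookkeeping. The two points requiring care are choosing the substitution that isolates the compact inequality $s(a-c)\le ab+c^2$, and recognizing that the correct form of the triangle inequality to invoke is $a-c\le b$ rather than $c\le a+b$. The short case split on the sign of $a-c$ then handles the regime where $x$ and $y$ are far apart relative to their distances from the origin.
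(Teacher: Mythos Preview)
Your proof is correct and follows essentially the same route as the paper's: after reducing to the algebraic inequality, both arguments use the triangle inequality in the form $|x|-|x-y|\le |y|$ together with $|y|\le |x|$ to bound the left side by $ab$ (equivalently, by $1$ after dividing by $\sqrt{|x||y|}$), which is then trivially at most $ab+c^2$. The only cosmetic differences are your introduction of auxiliary variables and the (harmless but unnecessary) case split on the sign of $a-c$.
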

\begin{proof}
Without loss of generality, we assume that $|x|\ge |y|$. To prove the required inequality, it suffices to show that
$$\frac{|x-y|+|x|}{\sqrt{|x||y|}}\le \left(1+\frac{|x-y|}{\sqrt{|x||y|}}\right)^2
$$
or, equivalently,
$$ \frac{|x|-|x-y|}{\sqrt{|x||y|}}\le 1+ \frac{|x-y|^2}{|x||y|}.
$$
This holds true, because
$$\frac{|x|-|x-y|}{\sqrt{|x||y|}}\le \frac{|y|}{\sqrt{|x||y|}}\le 1 \le 1+ \frac{|x-y|^2}{|x||y|},
$$
completing the proof of our lemma.
\end{proof}

We now prove that the conclusion of Lemma~\ref{u-tau-d0} still holds if we replace the once-punctured space by twice-punctured spaces.

\begin{lemma}\label{u-tau}
Let $x,y\in D_{p,q}$. Then 
$$u_{D_{p,q}}(x,y)\le 4\tilde{\tau}_{D_{p,q}}(x,y).
$$
\end{lemma}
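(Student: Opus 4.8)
The plan is to reduce the twice-punctured case to the once-punctured case already handled in Lemma~\ref{u-tau-d0}, exploiting the domain monotonicity of $\tilde{\tau}_D$ recalled in the introduction together with the structure of the $u_D$-metric. The key observation is that the boundary distance $d(x)=\mathrm{dist}(x,\partial D_{p,q})$ is realized by the \emph{nearer} of the two punctures. For fixed $x,y\in D_{p,q}$, I would compare the domain $D_{p,q}$ with the two once-punctured domains $D_p=\Rn\setminus\{p\}$ and $D_q=\Rn\setminus\{q\}$, each of which contains $D_{p,q}$.

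First I would note that since $D_{p,q}\subset D_p$ and $D_{p,q}\subset D_q$, domain monotonicity gives $\tilde{\tau}_{D_{p,q}}(x,y)\ge \tilde{\tau}_{D_p}(x,y)$ and likewise for $q$; equivalently the supremum defining $\tilde{\tau}_{D_{p,q}}$ is taken over the larger boundary $\{p,q\}$, so $\tilde{\tau}_{D_{p,q}}(x,y)=\tilde{\tau}_{D_p}(x,y)\vee\tilde{\tau}_{D_q}(x,y)$. For the $u_D$-side, the relevant quantity is $d(x)=|x-p|\wedge|x-q|$ and $d(y)=|y-p|\wedge|y-q|$. The difficulty is that the minimizing puncture for $x$ may differ from the minimizing puncture for $y$, so $u_{D_{p,q}}$ does not directly equal one of $u_{D_p}$ or $u_{D_q}$. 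The plan is to split into cases according to whether the same puncture realizes $d(x)$ and $d(y)$.

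In the \emph{aligned} case, where a single puncture (say $p$) satisfies $|x-p|\le|x-q|$ and $|y-p|\le|y-q|$, one has $d(x)=|x-p|$ and $d(y)=|y-p|$, whence $u_{D_{p,q}}(x,y)=u_{D_p}(x,y)$, while $\tilde{\tau}_{D_{p,q}}(x,y)\ge \tilde{\tau}_{D_p}(x,y)$. Applying Lemma~\ref{u-tau-d0} to the once-punctured space $D_p$ then yields $u_{D_{p,q}}(x,y)=u_{D_p}(x,y)\le 4\tilde{\tau}_{D_p}(x,y)\le 4\tilde{\tau}_{D_{p,q}}(x,y)$, as desired. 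The same argument works if $q$ is the common nearest puncture.

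The main obstacle is the \emph{crossed} case, where $p$ is nearest to $x$ but $q$ is nearest to $y$ (or vice versa), so that $d(x)=|x-p|$ and $d(y)=|y-q|$ with $|x-p|\le|x-q|$ and $|y-q|\le|y-p|$. Here neither once-punctured comparison captures $u_{D_{p,q}}$ directly, and this is where I expect to spend the real effort. My plan is to prove the inequality by hand in this case, reducing it after squaring to an elementary estimate of the form
$$
\frac{|x-y|+(|x-p|\vee|y-q|)}{\sqrt{|x-p|\,|y-q|}}\le\left(1+\frac{|x-y|}{\sqrt{|x-p|\,|y-q|}}\wedge\frac{|x-y|}{\sqrt{|x-q|\,|y-p|}}\right)^2,
$$
and then bounding the right-hand Cassinian supremum from below using whichever of the two products $|x-p||y-q|$ or $|x-q||y-p|$ is smaller, combined with the triangle-inequality bounds $|x-q|\le|x-p|+|p-q|$ and the defining inequalities of this case. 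I would try to show that the crossed configuration forces $|x-y|$ to be comparatively large — since $x$ and $y$ lie on opposite sides relative to the two punctures — which supplies the extra room needed; the cleanest route is probably to verify, as in the two preceding lemmas, the scalar inequality $(|x-p|\vee|y-q|)-|x-y|\le \sqrt{|x-p||y-q|}\,(1+|x-y|^2/(|x-p||y-q|))$ and check it survives under the crossed constraints.
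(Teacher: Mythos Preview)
Your reduction in the aligned case is exactly the paper's first step, and your identification of the crossed case $d(x)=|x-p|$, $d(y)=|y-q|$ as the real content is also correct. The gap is in how you set up the target inequality in the crossed case.

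The quantity inside $\tilde{\tau}_{D_{p,q}}$ is a supremum over $r\in\{p,q\}$ of $|x-y|/\sqrt{|x-r|\,|r-y|}$, so the two candidate denominators are the \emph{same-point} products $|x-p|\,|y-p|$ and $|x-q|\,|y-q|$, not the mixed products $|x-p|\,|y-q|$ and $|x-q|\,|y-p|$ that appear in your display and in your ``cleanest route'' scalar inequality. This matters in exactly the wrong direction: in the crossed configuration one has $|y-p|\ge |y-q|$ and $|x-q|\ge |x-p|$, so both $|x-p|\,|y-p|$ and $|x-q|\,|y-q|$ dominate $|x-p|\,|y-q|=d(x)d(y)$. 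Hence
\[
\sup_{r\in\{p,q\}}\frac{|x-y|}{\sqrt{|x-r|\,|y-r|}}\ \le\ \frac{|x-y|}{\sqrt{d(x)d(y)}},
\]
and your scalar inequality, which is equivalent to $u_{D_{p,q}}(x,y)\le 4\log\bigl(1+|x-y|/\sqrt{d(x)d(y)}\bigr)$, bounds $u_{D_{p,q}}$ by something \emph{larger} than $4\tilde{\tau}_{D_{p,q}}$, so nothing follows. (Your scalar inequality is in fact true---from $|x-p|\le|x-q|\le|x-y|+|y-q|$ one gets $|x-p|-|x-y|\le|y-q|\le\sqrt{|x-p|\,|y-q|}$---but it proves the wrong thing.)

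The paper fixes a specific puncture on the right: assuming $|x-p|\ge|y-q|$, it proves $u_{D_{p,q}}(x,y)\le 4\tilde{\tau}_{D_q}(x,y)$, i.e.
\[
\frac{|x-y|+|x-p|}{\sqrt{|x-p|\,|y-q|}}\ \le\ \Bigl(1+\frac{|x-y|}{\sqrt{|x-q|\,|y-q|}}\Bigr)^{2},
\]
with the correct product $|x-q|\,|y-q|$ on the right. Setting $a=|x-y|/|y-q|$ and using $|x-p|\le|x-q|\le|x-y|+|y-q|$, it bounds the left side above by $a+\sqrt{a+1}$ and the cross and square terms on the right below by $2a/\sqrt{1+a}$ and $a^2/(1+a)$, then finishes by a short case split $a<1$ versus $a\ge 1$. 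Your instinct that the crossed configuration forces $|x-y|$ to be large is exactly what drives the bound $|x-q|\le|x-y|+|y-q|$ here, but it has to be fed into the \emph{same-point} denominator $|x-q|\,|y-q|$, not into $d(x)d(y)$.
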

\begin{proof}
Suppose that $x,y\in D_{p,q}$. If $d(x)=|x-p|$ and $d(y)=|y-p|$ (or $d(x)=|x-q|$ and $d(y)=|y-q|$), then 
the proof follows from Theorem~\ref{u-tau-d0}. Hence, without loss of generality, we assume
$d(x)=|x-p|,d(y)=|y-q|$, and $|x-p|\ge |y-q|$. 
Note that $\tilde{\tau}_{D_{p,q}}(x,y)=\tilde{\tau}_{D_{p}}(x,y)\vee \tilde{\tau}_{D_{q}}(x,y)$.
Hence, to prove our claim, it is enough to establish the inequality $u_{D_{p,q}}(x,y)\le 4 \tilde{\tau}_{D_q}(x,y)$. That is to prove the inequality
$$\frac{|x-y|+|x-p|}{\sqrt{|x-p||y-q|}}\le \left(1+\frac{|x-y|}{\sqrt{|x-q||y-q|}}\right)^2.
$$

Let $|x-y|=a|y-q|$, where $a>0$. From the assumption we know that 
\be\label{proof-1}|x-p|\leq |x-q|\leq |x-y|+|y-q|.\ee
Now 
\beq\label{proof-2}\frac{|x-y|+|x-p|}{\sqrt{|x-p||y-q|}}&=&\frac{|x-y|}{\sqrt{|x-p||y-q|}}+\frac{|x-p|}{\sqrt{|x-p||y-q|}}\\ \nonumber&\leq&
\frac{|x-y|}{|y-q|}+\sqrt{\frac{|x-p|}{|y-q|}}\\ \nonumber&\leq& \frac{|x-y|}{|y-q|}+\sqrt{\frac{|x-y|}{|y-q|}+1}\\ \nonumber&\leq&
a+\sqrt{a+1}.\eeq

We obtain from (\ref{proof-1}) that \be\label{proof-3}\frac{2|x-y|}{\sqrt{|x-q||y-q|}}\geq \frac{2|x-y|}{\sqrt{1+a}|y-q|}=\frac{2a}{\sqrt{1+a}}.\ee

Next we divide the proof into two cases.

\bca $a<1$.\eca

It follows from 
(\ref{proof-2}) and (\ref{proof-3}) that 
\beq 
\bigg(1+\frac{|x-y|}{\sqrt{|x-q||y-q|}}\bigg)^2-\frac{|x-y|+|x-p|}{\sqrt{|x-p||y-q|}}
\nonumber &> & \frac{2|x-y|}{\sqrt{|x-q||y-q|}}+1-\frac{|x-y|+|x-p|}{\sqrt{|x-p||y-q|}}\\
\nonumber&\geq&\frac{2a}{\sqrt{1+a}}+1-a-\sqrt{a+1}>0
\eeq

\bca $a\ge 1$.\eca

From (\ref{proof-1}) we have 
\be \label{proof-4}
\frac{|x-y|^2}{|x-q||y-q|}\geq \frac{|x-y|^2}{(|x-y|+|y-q|)|y-q|}=\frac{a^2}{1+a}.
\ee
Then it follows from (\ref{proof-2}), (\ref{proof-3}) and \eqref{proof-4} that
\beq \bigg(1+\frac{|x-y|}{\sqrt{|x-q||y-q|}}\bigg)^2-\frac{|x-y|+|x-p|}{\sqrt{|x-p||y-q|}}\nonumber &\geq & \frac{2a}{\sqrt{1+a}}+\frac{a^2}{1+a}+1-a-\sqrt{a+1}\\ \nonumber
&\ge & 0.
\eeq
This completes the proof of our lemma.
\end{proof}

Combining Lemma~\ref{tildetau-u} and Lemma~\ref{u-tau} we obtain

\begin{theorem}\label{tau-u-g}
Let $x,y\in D\subsetneq \Rn$. Then
$$2 \tilde{\tau}_D(x,y) \le u_D(x,y)\le 4\tilde{\tau}_D(x,y).
$$
Both the inequalities are sharp.
\end{theorem}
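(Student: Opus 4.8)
The plan is to derive the two inequalities from the two preceding results and, for the upper bound, to reduce the general case to the twice-punctured case already settled in Lemma~\ref{u-tau}. The lower bound $2\tilde{\tau}_D(x,y)\le u_D(x,y)$ is precisely the first assertion of Theorem~\ref{tildetau-u}, so nothing new is needed there; its sharpness is furnished by the equality case recorded in that theorem, namely any pair $x,y$ with $d(x)=|x-p|=|y-p|=d(y)$ for some $p\in\partial D$ (for instance two points equidistant from the puncture in $D_0$), for which $2\tilde{\tau}_D(x,y)=u_D(x,y)$ exactly.

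The heart of the argument is the upper bound. First I would fix $x,y\in D$ and, using that $\partial D$ is nonempty and closed, choose boundary points $p,q\in\partial D$ realizing the distances, $|x-p|=d(x)$ and $|y-q|=d(y)$ (the infima are attained because $\partial D$ intersected with a large closed ball about $x$ is compact). Since $p,q\notin D$ we have $D\subseteq D_{p,q}$, so the domain monotonicity of the $\tilde{\tau}$-metric gives $\tilde{\tau}_{D_{p,q}}(x,y)\le\tilde{\tau}_D(x,y)$. The key point is that, although $u_D$ is \emph{not} domain monotone, this particular choice of $p,q$ forces $u_{D_{p,q}}(x,y)=u_D(x,y)$: because $q\in\partial D$ we have $|x-q|\ge d(x)=|x-p|$, whence $\mathrm{dist}(x,\partial D_{p,q})=\min\{|x-p|,|x-q|\}=d(x)$, and symmetrically $\mathrm{dist}(y,\partial D_{p,q})=d(y)$, so the two defining expressions for $u$ agree. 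Substituting this into Lemma~\ref{u-tau} (or into Lemma~\ref{u-tau-d0} in the degenerate case $p=q$) yields the chain
$$u_D(x,y)=u_{D_{p,q}}(x,y)\le 4\,\tilde{\tau}_{D_{p,q}}(x,y)\le 4\,\tilde{\tau}_D(x,y),$$
which is the desired upper bound.

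For the sharpness of the upper bound I would produce a limiting family in a twice-punctured space rather than in $D_0$; a short local computation indicates that in $D_0$ the ratio $u_{D_0}/\tilde{\tau}_{D_0}$ does not exceed $3$, so the constant $4$ can only be approached when the nearest boundary points of $x$ and $y$ are distinct and far apart. Taking $D=\mathbb{R}^n\setminus\{0,Qe_1\}$ with $x=he_1$ and $y=(Q-h)e_1$, one finds $d(x)=d(y)=h$ and that both punctures give the same value in the supremum defining $\tilde{\tau}_D$, so that $u_D(x,y)=2\log\frac{Q-h}{h}$ while $\tilde{\tau}_D(x,y)=\log\bigl(1+\frac{Q-2h}{\sqrt{h(Q-h)}}\bigr)$; letting $Q\to\infty$ drives the quotient to $4$.

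The main obstacle I anticipate is the upper-bound reduction itself: because $u_D$ fails domain monotonicity, one cannot simply enlarge $D$ to a punctured space and compare, and the entire argument rests on the exact identity $u_{D_{p,q}}(x,y)=u_D(x,y)$ for the nearest-point choice of $p,q$. A secondary delicate point is locating the correct sharpness configuration, since the naive once-punctured examples only reach the ratio $3$ and can give the misleading impression that the constant $4$ is unattainable.
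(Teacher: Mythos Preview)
Your argument is correct and follows essentially the same route as the paper: the lower bound is quoted from Theorem~\ref{tildetau-u}, and for the upper bound you reduce to the twice-punctured case via the identity $u_D(x,y)=u_{D_{p,q}}(x,y)$ for nearest boundary points $p,q$, then invoke Lemma~\ref{u-tau} together with domain monotonicity of $\tilde\tau$. The only difference is the sharpness example for the upper bound---the paper takes $D=\Bn$ with $y=-x$ and lets $|x|\to 1$, whereas you use a symmetric configuration in a twice-punctured line with the punctures receding; both are valid.
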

\begin{proof}
The first inequality is proved in Theorem~\ref{tildetau-u}. Now, we prove the second inequality.
Suppose that $p,q\in \partial D$ such that $d(x)=|x-p|$ and $d(y)=|y-q|$. 
Clearly, $D\subset D_{p,q}$ and $u_D(x,y)=u_{D_{p,q}}(x,y)$. Now,
$$u_D(x,y)=u_{D_{p,q}}(x,y)\le 4 \tilde{\tau}_{D_{p,q}}(x,y)\le 4\tilde{\tau}_{D}(x,y),
$$ 
where the first inequality follows from Lemma~\ref{u-tau} and the second inequality follows from
the monotone property of $\tilde{\tau}_D$ \cite[p.~2]{Ibr16}.

The sharpness of the first inequality is given in Theorem~\ref{tildetau-u}. For the sharpness of the second inequality we consider the unit ball $\Bn$. Choose the points $x$ and $y$ such that $y=-x$. Now,
$$u_{\Bn}(x,-x)=2\log\left(\frac{1+|x|}{1-|x|}\right) \mbox{ and } 
\tilde{\tau}_{\Bn}(x,-x)=\log\left(1+\frac{2|x|}{\sqrt{1-|x|^2}}\right).
$$
It follows that
$$\lim_{|x|\to 1} \frac{u_{\Bn}(x,-x)}{4\tilde{\tau}_{\Bn}(x,-x)}=\lim_{|x|\to 1} \frac{(2|x|+\sqrt{1-|x|^2})^2}{(1+|x|)^2}=1.
$$
Hence the proof is complete.
\end{proof}

An immediate consequence of Theorem~\ref{tau-u-g} is the following inclusion relation.

\begin{corollary}\label{u-tau-inclusion}
Let $x\in D\subsetneq \Rn$ and $t>0$. Then we have
$$B_{u_D}(x,r)\subseteq B_{\tilde{\tau}_D}(x,t)\subseteq B_{u_D}(x,R),
$$
where $r=2t$ and $R=4t$. The radii $r$ and $R$ are the best possible.
\end{corollary}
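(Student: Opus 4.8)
The plan is to read off both inclusions directly from the two-sided comparison in Theorem~\ref{tau-u-g}, and then to deduce optimality of the radii from the sharpness already recorded there. For the first inclusion I would take $y\in B_{u_D}(x,2t)$, so that $u_D(x,y)<2t$; applying the left inequality $2\tilde{\tau}_D(x,y)\le u_D(x,y)$ gives $\tilde{\tau}_D(x,y)<t$, i.e. $y\in B_{\tilde{\tau}_D}(x,t)$. For the second inclusion I would take $y\in B_{\tilde{\tau}_D}(x,t)$, so $\tilde{\tau}_D(x,y)<t$; the right inequality $u_D(x,y)\le 4\tilde{\tau}_D(x,y)$ then yields $u_D(x,y)<4t$, i.e. $y\in B_{u_D}(x,4t)$. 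This establishes the chain with $r=2t$ and $R=4t$.

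For the best-possibility of the radii I would invoke the sharpness already exhibited for the underlying metric inequalities rather than re-deriving anything. The value $r=2t$ is optimal via the construction in the corollary following Theorem~\ref{tildetau-u}: in $D_0=\Rn\setminus\{0\}$ with $|x|=|y|$ the equality case $d(x)=|x-p|=|y-p|=d(y)$ of Theorem~\ref{tildetau-u} holds with $p=0$, forcing $u_{D_0}(x,y)=2\tilde{\tau}_{D_0}(x,y)$; hence a point realizing $u_{D_0}(x,y)=2t$ satisfies $\tilde{\tau}_{D_0}(x,y)=t$ and so lies outside the open ball $B_{\tilde{\tau}_{D_0}}(x,t)$, which shows no $r>2t$ can be admitted. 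Dually, $R=4t$ is optimal via the sharpness of the second inequality of Theorem~\ref{tau-u-g}: for $D=\Bn$ and $y=-x$ the computation there gives $\lim_{|x|\to 1} u_{\Bn}(x,-x)/\big(4\tilde{\tau}_{\Bn}(x,-x)\big)=1$, so $u_D$ can be driven arbitrarily close to $4\tilde{\tau}_D$; consequently for any $R'<4t$ there exist admissible points with $\tilde{\tau}_D<t$ yet $u_D>R'$, and $R$ cannot be decreased.

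There is no genuine obstacle here, since the two inclusions are immediate from the sandwich estimate of Theorem~\ref{tau-u-g}. The only point requiring care is phrasing the optimality as a statement about \emph{boundary} points of the two balls: it is not enough to cite the constants $2$ and $4$; one must observe that in each extremal family the ratio of the two metrics actually attains (or tends to) the relevant endpoint, which is precisely what forbids enlarging $r$ or shrinking $R$. I would make this explicit in each case by pointing to the equality configuration for $r$ and to the limiting ratio for $R$.
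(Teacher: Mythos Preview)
Your proposal is correct and follows essentially the same route as the paper: both inclusions are read off directly from the two-sided estimate in Theorem~\ref{tau-u-g}, and the sharpness is witnessed by the same two configurations---the punctured space $D_0$ with $|x|=|y|$ for $r=2t$, and the unit ball with $y=-x$, $|x|\to 1$, for $R=4t$. The paper writes out the limiting ratio in $\Bn$ explicitly rather than citing Theorem~\ref{tau-u-g}, but the content is identical.
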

\begin{proof}
Let $y\in B_{u_D}(x,r)$, $r=2t$ and $R=4t$. Then by Theorem~\ref{tau-u-g} we have $\tilde{\tau}_D(x,y)<t$. So, $B_{u_D}(x,r)\subseteq B_{\tilde{\tau}_D}(x,t)$. Conversely, if $y\in B_{\tilde{\tau}_D}(x,t)$, then also by Theorem~\ref{tau-u-g} we have $y\in B_{u_D}(x,R)$. So, $B_{\tilde{\tau}_D}(x,t)\subseteq B_{u_D}(x,R)$ and hence the inclusion follows. Next, we need to prove the sharpness part.

First we consider the domain $D=D_0$ and let $x\in D_0$. Now choose $y\in B(0,|x|)\cap \partial B_{\tilde{\tau}_{D_0}}(x,t)$. Then
$$\tilde{\tau}_{D_0}(x,y)=t=\log\left(1+\frac{|x-y|}{|x|}\right)=\frac{u_{D_0}(x,y)}{2},
$$
which proves the sharpness of the first inclusion. 
Secondly, consider $D=\Bn$ and let $x\in \Bn$ be arbitrary. 
Choose $y\in \Bn$ such that $x$ and $y$ lie on a diameter of $\Bn$ with $0$ lying 
in-between and $|y|\le |x|$. Now, 
$$u_{\Bn}(x,y)=2\log\left(\frac{|x-y+1-|y||}{\sqrt{(1-|x|)(1-|y|)}}\right) \mbox{ and }
\tilde{\tau}_{\Bn}(x,y)=\log\left(1+\frac{|x-y|}{\sqrt{(1-|x|)(1+|y|)}}\right).
$$
It follows that
\begin{eqnarray*}
\lim_{x\to e_1} \frac{u_{\Bn}(x,y)}{\tilde{\tau}_{\Bn}(x,y)} &=&
\lim_{x\to e_1} \frac{(|x-y|+1-|y|)(1-|x|)(1+|y|)}{\sqrt{(1-|x|)(1-|y|)}(|x-y|+\sqrt{(1-|x|)(1+|y|)})^2}\\
&=&\left\{
\begin{array}{ll}
1, & \mbox{ if } y=-x,\\
0, & \mbox{otherwise}.\\
\end{array}\right.\\
\end{eqnarray*}
Hence we conclude that 
for each $x\in \Bn$ with $|x|\to 1$ and $t>0$, there exist $y=-x$ such that $y\in \partial B_{\tilde{\tau}_D}(x,t)$ and $u_D(x,y)=4t$. This proves the sharpness of the second inclusion
relation and hence the proof is complete.
\end{proof}

Recall that 
\begin{equation}\label{j-tau}
\frac{1}{2} \tilde{j}_D(x,y)\le \tilde{\tau}_D(x,y)\le \tilde{j}_D(x,y)
\end{equation}
holds true for 
$D\subsetneq \Rn$ (see \cite[Theorem~4.2, 4.3]{Ibr16}). Both the inequalities are sharp. 
The proof of the sharpness part of the left hand side inequality is done by the method of contradiction in \cite{Ibr16}. 
Here we give a precise example to prove the sharpness part of the left hand side inequality.

Consider the unit ball $\Bn$ and $x,y\in \Bn$ with $y=-x$. Now we see that
$$\tilde{\tau}_{\Bn}(x,-x)=\log\left(1+\frac{2|x|}{\sqrt{1-|x|^2}}\right) \mbox{ and } \tilde{j}_{\Bn}(x,-x)=\log\left(
1+\frac{2|x|}{1-|x|}\right).
$$
It follows that
\begin{equation}\label{j-tau-eqn}
\lim_{|x|\to 1}\frac{\tilde{j}_{\Bn}(x,-x)}{2\tilde{\tau}_{\Bn}(x,-x)}=\lim_{|x|\to 1} \frac{2|x|+\sqrt{1-|x|^2}}{2}=1.
\end{equation}

Now, we establish inclusion relation between the $\tilde{j}_D$-metric and the $\tilde{\tau}_D$-metric balls.
\begin{theorem}
Let $D\subsetneq \Rn$ and $x\in D$ and $t>0$. Then the following inclusion property holds true:
$$B_{\tilde{j}_D}(x,r)\subseteq B_{\tilde{\tau}_D}(x,t)\subseteq B_{\tilde{j}_D}(x,R).
$$
Here $r=t$ and $R=2t$. The radii $r$ and $R$ are the best possible.
\end{theorem}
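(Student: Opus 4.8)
The plan is to read off both inclusions directly from the sharp two-sided estimate \eqref{j-tau}, exactly as was done for the pair $(u_D,\tilde{\tau}_D)$ in Corollary~\ref{u-tau-inclusion}, and then to certify that the constants $1$ and $2$ in the radii $r=t$ and $R=2t$ are optimal by importing the very configurations that make \eqref{j-tau} sharp.

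First I would dispose of the two inclusions. If $y\in B_{\tilde{j}_D}(x,t)$, then $\tilde{j}_D(x,y)<t$, and the right-hand bound $\tilde{\tau}_D\le\tilde{j}_D$ in \eqref{j-tau} gives $\tilde{\tau}_D(x,y)<t$, so $y\in B_{\tilde{\tau}_D}(x,t)$; this proves the left inclusion with $r=t$. Conversely, if $y\in B_{\tilde{\tau}_D}(x,t)$, then $\tilde{\tau}_D(x,y)<t$, and the left-hand bound $\tfrac12\tilde{j}_D\le\tilde{\tau}_D$ gives $\tilde{j}_D(x,y)<2t$, so $y\in B_{\tilde{j}_D}(x,2t)$; this proves the right inclusion with $R=2t$. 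Both steps are purely formal once \eqref{j-tau} is granted.

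The substance lies in sharpness. To see that $r$ cannot be enlarged beyond $t$, I would pass to the once-punctured space $D_0=\Rn\setminus\{0\}$, whose unique boundary point forces $\tilde{\tau}_{D_0}(x,y)=\log\bigl(1+|x-y|/\sqrt{|x||y|}\bigr)$ and $\tilde{j}_{D_0}(x,y)=\log\bigl(1+|x-y|/(|x|\wedge|y|)\bigr)$. Choosing $x,y$ with $|x|=|y|$ collapses $\sqrt{|x||y|}$ to $|x|\wedge|y|$, so that $\tilde{\tau}_{D_0}(x,y)=\tilde{j}_{D_0}(x,y)$ \emph{exactly}; for such a $y$ chosen with $\tilde{\tau}_{D_0}(x,y)=t$, the point lies on $\partial B_{\tilde{\tau}_{D_0}}(x,t)$ while satisfying $\tilde{j}_{D_0}(x,y)=t$, which rules out any $r>t$ in the first inclusion. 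This is precisely the equality case already recorded for the right-hand inequality of \eqref{j-tau}.

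Finally, to see that $R$ cannot be decreased below $2t$, I would reuse the extremal family behind \eqref{j-tau-eqn}: in $\Bn$ with $y=-x$ one has $\lim_{|x|\to1}\tilde{j}_{\Bn}(x,-x)/\bigl(2\tilde{\tau}_{\Bn}(x,-x)\bigr)=1$, so the ratio $\tilde{j}_{\Bn}/\tilde{\tau}_{\Bn}$ is forced arbitrarily close to $2$. Hence for any prescribed constant smaller than $2$ one finds a diametric pair with $\tilde{\tau}_{\Bn}(x,-x)<t$ yet $\tilde{j}_{\Bn}(x,-x)$ exceeding that constant times $t$, so $y=-x$ sits in $B_{\tilde{\tau}_D}(x,t)$ but outside the smaller $\tilde{j}_D$-ball. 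The main obstacle I anticipate is bookkeeping in these two sharpness steps --- matching the extremal configurations to the precise open-ball constraints and confirming that the limiting ratio genuinely forbids improving the constants --- rather than any conceptual difficulty, since the governing relations \eqref{j-tau} and \eqref{j-tau-eqn} are already in hand.
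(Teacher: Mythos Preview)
Your proposal is correct and follows essentially the same route as the paper: both inclusions are read off directly from \eqref{j-tau}, the sharpness of $r=t$ is witnessed in $D_0$ by points with $|x|=|y|$ (so $\tilde{\tau}_{D_0}=\tilde{j}_{D_0}$), and the sharpness of $R=2t$ is witnessed in $\Bn$ by diametric pairs $y=-x$ with $|x|\to 1$, where the paper refers back to the computation in Corollary~\ref{u-tau-inclusion} while you invoke the equivalent limit \eqref{j-tau-eqn}.
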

\begin{proof}
The proof follows from (\ref{j-tau}). To show that the radius $r$ is the best possible, consider the domain 
$D=D_0=\Rn\setminus\{0\}$ and $x\in D$. Choose $y\in \partial B(0,|x|)\cap \partial B_{\tilde{\tau}_D}(x,t)$.
Now clearly, $\tilde{j}_D(x,y)=\tilde{\tau}_D(x,y)=t$. 

To show $R$ is the best possible, consider the domain
$D=\Bn$. With the similar argument given in the proof of Corollary~\ref{u-tau-inclusion}, for the second 
inclusion property, we can show that for each $x\in \Bn$ with $|x|\to 1$ and $t>0$, there exist $y(=-x)$ such that $y\in \partial B_{\tilde{\tau}_{\Bn}}(x,t)$ and $\tilde{j}_{\Bn}(x,y)=2t$. This completes the proof of our theorem.
\end{proof}
By Theorem~\ref{tau-u-g} and \eqref{j-tau} we have 
$$\tilde{j}_D(x,y)\le 2\tilde{\tau}_D(x,y)\le u_D(x,y)
$$
and also
$$u_D(x,y)\le 4\tilde{\tau}_D(x,y)\le 4\tilde{j}_D(x,y).
$$
Hence we have the following relationship between the $\tilde{j}_D$-metric and the $u_D$-metric.

\begin{theorem}\label{lem-tildej-u}
For $D\subsetneq \Rn$ we have
$$\tilde{j}_D(x,y)\le u_D(x,y)\le 4 \tilde{j}_D(x,y).
$$
The first inequality is sharp.
\end{theorem}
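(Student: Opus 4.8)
The plan is to read both inequalities directly off the chain of estimates already assembled just before the statement, and then to supply an explicit escaping family of points that witnesses sharpness of the left-hand inequality. For the inequalities themselves there is nothing to do beyond bookkeeping. The left inequality of \eqref{j-tau} reads $\tilde{j}_D(x,y)\le 2\tilde{\tau}_D(x,y)$, while the left inequality of Theorem~\ref{tau-u-g} reads $2\tilde{\tau}_D(x,y)\le u_D(x,y)$; concatenating the two gives $\tilde{j}_D(x,y)\le u_D(x,y)$. In the same way, the right inequality of Theorem~\ref{tau-u-g} yields $u_D(x,y)\le 4\tilde{\tau}_D(x,y)$ and the right inequality of \eqref{j-tau} yields $\tilde{\tau}_D(x,y)\le \tilde{j}_D(x,y)$, so that $u_D(x,y)\le 4\tilde{j}_D(x,y)$. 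Thus both bounds hold for every domain $D\subsetneq\Rn$.

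The only genuine task is the sharpness of $\tilde{j}_D\le u_D$, and here some care is needed. One cannot simply force both constituent inequalities to be tight simultaneously: the configuration that produces $2\tilde{\tau}_D=u_D$ in Theorem~\ref{tildetau-u} requires $d(x)=d(y)$ with a common nearest boundary point $p$, but in that situation $\tilde{\tau}_D$ and $\tilde{j}_D$ coincide, so that $\tilde{j}_D=\frac{1}{2}u_D$ rather than $\tilde{j}_D=u_D$. Sharpness must therefore be sought asymptotically rather than through an exact equality. I would pass to the once-punctured space $D=D_0=\Rn\setminus\{0\}$ and take the collinear, widely separated points $y=e_1$ and $x=te_1$ with $t>1$. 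For these points $d(x)\wedge d(y)=1$ and $|x-y|=t-1$, so a direct computation gives $\tilde{j}_{D_0}(x,y)=\log t$ and $u_{D_0}(x,y)=2\log\bigl((2t-1)/\sqrt{t}\bigr)$.

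It then remains to form the quotient and let $t\to\infty$. Since $2\log\bigl((2t-1)/\sqrt{t}\bigr)=2\log(2t-1)-\log t$ behaves like $\log t+2\log 2$ for large $t$, one obtains
$$\lim_{t\to\infty}\frac{\tilde{j}_{D_0}(x,y)}{u_{D_0}(x,y)}=\lim_{t\to\infty}\frac{\log t}{2\log(2t-1)-\log t}=1,$$
which exhibits the sharpness of the first inequality. I do not expect any real obstacle in this argument, as the estimates are immediate from earlier results; the one point worth flagging is the observation above that the natural ``equality'' configuration is the wrong one, which is what forces the use of the escaping family $x=te_1$, $t\to\infty$, in the punctured space.
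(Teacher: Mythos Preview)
Your argument is correct. The derivation of the two inequalities is exactly the chain the paper displays immediately before the theorem, so there is nothing to critique there. Your sharpness computation in $D_0=\Rn\setminus\{0\}$ with $y=e_1$, $x=te_1$ is also valid: $\tilde{j}_{D_0}(x,y)=\log t$ and $u_{D_0}(x,y)=2\log\bigl((2t-1)/\sqrt{t}\bigr)$, and the ratio tends to $1$ as $t\to\infty$.

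The only difference from the paper is the choice of witnessing family. The paper works in the twice-punctured space $D=\Rn\setminus\{-e_1,e_1\}$ with $x=0$, $y=te_2$, obtaining $\tilde{j}_D(0,te_2)=\log(1+t)$ and $u_D(0,te_2)=2\log\bigl((t+\sqrt{1+t^2})/(1+t^2)^{1/4}\bigr)$, and then computes the limit of the quotient (via L'H\^opital, in effect). Your once-punctured, collinear example is cleaner: the distances to the boundary are just $|x|$ and $|y|$, the formulas involve no square roots of $1+t^2$, and the asymptotic $2\log(2t-1)-\log t\sim\log t+2\log 2$ is immediate. Both routes are the same in spirit---an escaping family giving ratio $\to 1$---but yours avoids the heavier algebra. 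Your side remark that the exact-equality configuration $d(x)=|x-p|=|y-p|=d(y)$ only yields $\tilde{j}_D=\tfrac12 u_D$ is a nice piece of motivation that the paper does not include.
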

\begin{proof}
For the sharpness part, consider the domain $D=\Rn\setminus\{-e_1,e_1\}$. Choose $x=0$ and $y=te_2, t>1$. Then $\tilde{j}_D(0,te_2)=\log(1+t)$ and
$$ u_D(0,te_2)=2\log\left(\frac{t+\sqrt{1+t^2}}{(1+t^2)^{1/4}}\right)=\log\left(\frac{1+2t^2+2t\sqrt{1+t^2}}{\sqrt{1+t^2}}\right).
$$
Now we see that
\begin{eqnarray*}
\lim_{t\to \infty} \frac{\tilde{j}_D(0,te_2)}{u_D(0,te_2)} &=&\lim_{t\to \infty} \frac{\log(1+t)}{\log\left(\ds\frac{1+2t^2+2t\sqrt{1+t^2}}{\sqrt{1+t^2}}\right)}\\
&=& \lim_{t\to \infty} \ds \frac{(1+t^2)(1+2t^2+2t\sqrt{1+t^2})}{(1+t)(4t(1+t^2)+2(1+t^2)^{3/2}-t-2t^3)}=1.
\end{eqnarray*}
This completes the proof of our theorem.
\end{proof}

\begin{remark}
The constant $4$ in the right hand side inequality of Lemma~\ref{lem-tildej-u} can't be replaced by $2$ due to the fact that 
$$u_D(x,y)\le 2\tilde{j}_D(x,y)\iff |x-y|^2\ge d(x)d(y)
$$
for every $x,y\in D$, which is not true in general.
\end{remark}

As an immediate consequence of Theorem~\ref{lem-tildej-u} we have the following inclusion relation.
\begin{corollary}
Let $D\subsetneq \Rn$, $x\in D$, and $t>0$. Then
$$B_{u_D}(x,r)\subseteq B_{\tilde{j}_D}(x,t)\subset B_{u_D}(x,R),
$$
where $r=t$ and $R=4t$. The radius $r$ is best possible.
\end{corollary}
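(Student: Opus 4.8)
The plan is to read off both inclusions directly from the two-sided estimate $\tilde{j}_D(x,y)\le u_D(x,y)\le 4\tilde{j}_D(x,y)$ of Theorem~\ref{lem-tildej-u}, and then to establish the optimality of the inner radius $r=t$ from the sharpness of the left-hand inequality there. The inclusions themselves are purely formal consequences of the metric comparison, so almost all the content lies in the sharpness claim.

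First I would prove the first inclusion. Fix $y\in B_{u_D}(x,r)$ with $r=t$, so that $u_D(x,y)<t$; the first inequality of Theorem~\ref{lem-tildej-u} gives $\tilde{j}_D(x,y)\le u_D(x,y)<t$, whence $y\in B_{\tilde{j}_D}(x,t)$, i.e. $B_{u_D}(x,t)\subseteq B_{\tilde{j}_D}(x,t)$. For the second inclusion, take $y\in B_{\tilde{j}_D}(x,t)$, so $\tilde{j}_D(x,y)<t$; the second inequality of Theorem~\ref{lem-tildej-u} then yields $u_D(x,y)\le 4\tilde{j}_D(x,y)<4t=R$, so $y\in B_{u_D}(x,R)$. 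These two steps need no further computation.

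The substantive point is that $r=t$ is best possible, which I would phrase as: the constant $1$ linking $r$ to $t$ cannot be enlarged, i.e. for no $c>1$ does $B_{u_D}(x,ct)\subseteq B_{\tilde{j}_D}(x,t)$ hold for all domains and all points. To see this I would reuse the extremal configuration from the proof of Theorem~\ref{lem-tildej-u}: the domain $D=\Rn\setminus\{-e_1,e_1\}$ with $x=0$ and $y=se_2$, $s>1$. There $\tilde{j}_D(0,se_2)=\log(1+s)$ while $u_D(0,se_2)$ grows like $\log(4s)$ as $s\to\infty$, so that $\tilde{j}_D(0,se_2)/u_D(0,se_2)\to 1$. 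Consequently, given any $c>1$, one may choose $s$ so large that $u_D(0,se_2)<c\,\tilde{j}_D(0,se_2)$. Setting $t=\tilde{j}_D(0,se_2)$, this point satisfies $u_D(0,se_2)<ct$ but $\tilde{j}_D(0,se_2)=t$, so $y\in B_{u_D}(x,ct)\setminus B_{\tilde{j}_D}(x,t)$, defeating the inclusion for that $c$. Hence the inner radius $r=t$ cannot be improved.

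The expected obstacle is precisely this sharpness step. Unlike the outer radius and the sharpness arguments for the other corollaries in this section, where an explicit boundary point realizes equality on both balls simultaneously, the inequality $\tilde{j}_D\le u_D$ is never an equality for distinct points, so no single extremal point can certify the inner radius. The optimality is therefore inherently asymptotic and must be extracted from the limiting ratio $\tilde{j}_D/u_D\to 1$; the only care required is to interpret \emph{best possible} as the impossibility of improving the multiplicative constant $1$ in $r=t$, rather than as the existence of a point lying on both metric spheres.
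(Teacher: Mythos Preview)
Your proposal is correct and follows the same route as the paper: both inclusions and the sharpness of $r$ are read off directly from Theorem~\ref{lem-tildej-u}. The paper's own proof is a single sentence (``Proof follows from Theorem~\ref{lem-tildej-u}''), so your write-up simply fills in the details that the paper leaves implicit; in particular, your observation that equality in $\tilde{j}_D\le u_D$ never occurs for distinct points, so that the sharpness of $r=t$ must be understood asymptotically via the example of Theorem~\ref{lem-tildej-u} rather than through a point lying on both spheres, is exactly the right reading of ``best possible'' here.
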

\begin{proof}
Proof follows from Theorem~\ref{lem-tildej-u}.
\end{proof}

The next result shows that the $\tilde{\tau}_D$-metric balls can be written as the intersection of $\tilde{\tau}$-metric balls in punctured spaces over the boundary points of $D$.
\begin{proposition}
Let $D\subsetneq \Rn$, $x\in D$, and $r>0$. Then
$$B_{\tilde{\tau}_D}(x,r)=\cap_{p\in \partial D} B_{\tilde{\tau}_{\Rn\setminus\{p\}}}(x,r).
$$
\end{proposition}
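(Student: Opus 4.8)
The plan is to reduce the claimed set identity to the single pointwise formula
\[
\tilde{\tau}_D(x,y)=\sup_{p\in\partial D}\tilde{\tau}_{\Rn\setminus\{p\}}(x,y),
\]
and then to read off the two inclusions. First I would observe that for any $p\in\partial D$ the once-punctured space $\Rn\setminus\{p\}$ has boundary exactly $\{p\}$, so the supremum defining $\tilde{\tau}$ collapses to a single term and
\[
\tilde{\tau}_{\Rn\setminus\{p\}}(x,y)=\log\left(1+\frac{|x-y|}{\sqrt{|x-p|\,|p-y|}}\right).
\]
Because $t\mapsto\log(1+t)$ is increasing, taking the supremum over $p\in\partial D$ of the right-hand side reproduces precisely the definition of $\tilde{\tau}_D(x,y)$, which gives the displayed identity. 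Throughout I read the balls on the right as subsets of $D$ (equivalently, I restrict attention to $y\in D$); the case $x=y$ is trivial, so I assume $x\neq y$, whence $|x-y|>0$.

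The inclusion $B_{\tilde{\tau}_D}(x,r)\subseteq\bigcap_{p\in\partial D}B_{\tilde{\tau}_{\Rn\setminus\{p\}}}(x,r)$ is immediate from the identity, or directly from the domain monotonicity property recalled in the introduction: since $p\in\partial D$ forces $p\notin D$, we have $D\subset\Rn\setminus\{p\}$ and hence $\tilde{\tau}_{\Rn\setminus\{p\}}(x,y)\le\tilde{\tau}_D(x,y)$. Thus if $\tilde{\tau}_D(x,y)<r$ then $\tilde{\tau}_{\Rn\setminus\{p\}}(x,y)<r$ for every $p$, so $y$ lies in every ball on the right. This direction requires nothing more.

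The reverse inclusion is the crux. Assuming $\tilde{\tau}_{\Rn\setminus\{p\}}(x,y)<r$ for every $p\in\partial D$, I must deduce $\tilde{\tau}_D(x,y)=\sup_p\tilde{\tau}_{\Rn\setminus\{p\}}(x,y)<r$; passing from a strict bound at each point to a strict bound on the supremum is legitimate only when the supremum is attained. The main obstacle is therefore to prove that the function
\[
\phi(p)=\frac{|x-y|}{\sqrt{|x-p|\,|p-y|}},\qquad p\in\partial D,
\]
achieves its supremum on $\partial D$. I would note that $\phi$ is continuous on the closed set $\partial D$ (the points $x,y\in D$ are not in $\partial D$, and $|x-p|\ge d(x)>0$, $|p-y|\ge d(y)>0$ keep the denominator positive), that these same bounds give $\phi(p)\le|x-y|/\sqrt{d(x)d(y)}$, and that $\phi(p)\to0$ as $|p|\to\infty$. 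Consequently the supremum over $\partial D$ equals the supremum over the compact set $\partial D\cap\overline{B(0,M)}$ for $M$ large, and is attained at some $p_0\in\partial D$. Then $\tilde{\tau}_D(x,y)=\tilde{\tau}_{\Rn\setminus\{p_0\}}(x,y)<r$ by hypothesis, so $y\in B_{\tilde{\tau}_D}(x,r)$, completing the reverse inclusion and the proof. I expect the attainment of the supremum to be the only genuinely nontrivial point, since the passage through a strict bound would break if the supremum were merely approached but not achieved.
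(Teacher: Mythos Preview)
Your argument follows the same route as the paper's---both inclusions come from the identity $\tilde{\tau}_D(x,y)=\sup_{p\in\partial D}\tilde{\tau}_{\Rn\setminus\{p\}}(x,y)$ together with domain monotonicity---but you are in fact more careful: the paper simply asserts that $\tilde{\tau}_{\Rn\setminus\{p\}}(x,y)<r$ for every $p$ implies $\sup_p\tilde{\tau}_{\Rn\setminus\{p\}}(x,y)<r$, whereas your compactness argument showing the supremum is attained is exactly what makes that passage legitimate. Your explicit caveat that the balls on the right should be read as subsets of $D$ is likewise a point the paper leaves silent.
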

\begin{proof}
Suppose that $y\in \cap_{p\in \partial D} B_{\tilde{\tau}_{\Rn\setminus\{p\}}}(x,r)$. Then $\tilde{\tau}_{\Rn\setminus\{p\}}(x,y)<r$ for all $p\in \partial D$. In particular, 
$$\tilde{\tau}_D(x,y)=\sup_{p\in \partial D} \tilde{\tau}_{\Rn\setminus\{p\}}(x,y)<r.
$$
So, $\cap_{p\in \partial D} B_{\tilde{\tau}_{\Rn\setminus\{p\}}}(x,r)\subseteq B_{\tilde{\tau}_D}(x,r)$. 
Conversely, suppose that $y\in B_{\tilde{\tau}_D}(x,r)$ and let $p\in \partial D$. Then
$$B_{\tilde{\tau}_D}(x,r)\subseteq B_{\tilde{\tau}_{\Rn\setminus\{p\}}}(x,r)
$$ 
by the monotone property of the $\tilde{\tau}_D$-metric.
Hence, $ B_{\tilde{\tau}_D}(x,r) \subseteq \cap_{p\in \partial D} B_{\tilde{\tau}_{\Rn\setminus\{p\}}}(x,r)$
and the proof is complete.
\end{proof}

\section{Comparison with other related metrics}\label{u-tildetau-other}
In this section, we consider the Cassinian \cite{Ibr09}, the Seittenranta \cite{Sei99}, the triangular ratio \cite{CHKV15} and the half-Apollonian  \cite{HL04} metrics, and compare them with the $\tilde{\tau}_D$-metric and the $u_D$-metric.
Main results of this section are stated in Table~\ref{T2}.
\begin{center}
\begin{table}[H]
\begin{tabular}{ | c | c | c | }
\hline
& Comparison with $\tilde{\tau}_D$ & Comparison with $u_D$\\[1mm]
\hline
&&\\[-3mm]
& $\cfrac{1}{4}\delta_D(x,y)\le \tilde{\tau}_D(x,y)\le \delta_D(x,y)$  & $\cfrac{\delta_D}{2}\le u_D\le 4j_D$\\[1mm]
$\delta_D$ & [Theorem~\ref{delta-tau}] & [Corollary~\ref{delta-u}]\\[1mm] 
&(Right hand side inequality is sharp)&\\[1mm]
\hline
&&\\[-3mm]
$c_D$ & $c_D(x,y)\le \cfrac{e^{4\tilde{\tau}_D(x,y)}-1}{d(y)}$ & $c_D(x,y)\le \cfrac{e^{2u_D(x,y)}-1}{d(y)}$\\[1mm] 
 & [Corollary~\ref{cor-cD-tau-u}] & [Corollary~\ref{cor-cD-tau-u}]\\[1mm]
\hline
&&\\[-3mm]
 & $(\log 3)s_D\le \tilde{\tau}_D$ & $(\log 9)s_D\le  u_D$\\[1mm]
$s_D$ & [Theorem~\ref{tau-sD}] & [Corollary~\ref{cor-sD-u}]\\[1mm]
&The inequality is sharp&\\
\hline 
&&\\[-3mm]
 & $\frac{1}{2}\eta_D\le \tilde{\tau}_D\le \log(2+e^{\eta_D})$ & $\eta_D\le u_D\le 4\log(2+e^{\eta_D})$\\[1mm]
$\eta_D$ & \cite{IS} & [Lemma~\ref{eta-u}]\\[1mm]
&(Both inequalities are sharp)& \\[1mm]
\hline

\end{tabular}
\vspace*{0.5cm}
\caption{Comparisons with other hyperbolic-type metrics}\label{T2}
\end{table}
\end{center}

We begin by comparing the Cassinian metric with the Seittenranta metric. 
The {\it Cassinian metric}, $c_D$, of the domain $D\subsetneq \Rn$ is defined as
$$
c_D (x,y)=\sup_{p\in \partial D} \frac{|x-y|}{|x-p||p-y|} .
$$
This metric was first introduced and studied in \cite{Ibr09} and subsequently studied in \cite{IMSZ,HKVZ,KMS}.
Geometrically, the $c_D$-metric can be defined by taking the maximal Cassinian oval in $D$ with foci at
$x$ and $y$ (see, \cite{Ibr09}).
Clearly, the supremum in the definition is attained at some point $p \in \partial D$.

The {\it{Seittenranta metric}}, $\delta_D$, introduced in \cite{Sei99}, is defined by
$$\delta_D(x,y)=\log(1+m_D(x,y))
$$
where 
$$m_D(x,y)=\sup_{a,b\in \partial D} \frac{|x-y||a-b|}{|x-a||y-b|}.
$$
Note that the quantity $m_D(x,y)$ does not define a metric. The Seittenranta metric is M\"obius invariant and coincides with the hyperbolic metric of the unit ball $\Bn$. 

The $c_D$-metric and the $\delta_D$-metric are exponentially related, which is stated in the following
theorem.

\begin{theorem}\label{c-delta}
Let $D\subsetneq \overline{\Rn}$ be any domain. Then
$$c_D(x,y)\le \frac{e^{\delta_D(x,y)}-1}{d(y)}.
$$
The inequality is sharp. 
\end{theorem}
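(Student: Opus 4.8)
The plan is to localise the Cassinian supremum and then split off the factor $d(y)$ against one of the two distances that appear. As noted right after the definition of $c_D$, the supremum is attained, say at $p_0\in\partial D$, so that $c_D(x,y)=\frac{|x-y|}{|x-p_0|\,|p_0-y|}$. Since $p_0\in\partial D$ we have $|p_0-y|\ge d(y)$, and hence $c_D(x,y)\le \frac{1}{d(y)}\cdot\frac{|x-y|}{|x-p_0|}$. Because $e^{\delta_D(x,y)}-1=m_D(x,y)$, the theorem will follow once I establish the scale-free bound $\frac{|x-y|}{|x-p_0|}\le m_D(x,y)$.

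To get this I would feed the supremum defining $m_D$ the pair $a=p_0$ and a boundary point $b$ lying on the $y$-side of the perpendicular bisector of $[p_0,y]$, i.e. one with $|p_0-b|\ge|y-b|$. For such $b$,
\[
m_D(x,y)\ge \frac{|x-y|\,|p_0-b|}{|x-p_0|\,|y-b|}\ge\frac{|x-y|}{|x-p_0|},
\]
which is exactly what the first paragraph needs, giving $c_D(x,y)\le m_D(x,y)/d(y)=(e^{\delta_D(x,y)}-1)/d(y)$.

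The crux is therefore the geometric claim that such a $b$ exists, equivalently that the closed half-space $H=\{z:\,|z-p_0|\ge|z-y|\}$ — which contains $y$ but not $p_0$ — meets $\partial D$. When $\infty\in\partial D$ one simply takes $b=\infty$, for which $|p_0-b|/|y-b|\to1$; this already settles the principal cases (balls, half-spaces, punctured spaces). For a general $D$ I would argue that, were $\partial D$ disjoint from $H$, the connected unbounded set $H$ would be forced into $D$ (it already contains the interior point $y$ and cannot cross $\partial D$), and then derive a contradiction from the geometry of $D$; this is the step that requires the most care and where I expect the argument to be most delicate.

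For sharpness I would exhibit a configuration in which both inequalities above become equalities. The cleanest is the punctured space $D_0=\Rn\setminus\{0\}$, where $p_0=0$ forces $|p_0-y|=|y|=d(y)$, and choosing $b=\infty$ yields $m_{D_0}(x,y)=\frac{|x-y|}{|x|\wedge|y|}$. Consequently $c_{D_0}(x,y)=\frac{|x-y|}{|x|\,|y|}$ already coincides with $\frac{e^{\delta_{D_0}(x,y)}-1}{d(y)}=\frac{|x-y|}{(|x|\wedge|y|)\,|y|}$ whenever $|x|\le|y|$, so the constant cannot be improved; the verification is a direct substitution into the explicit formulas for $c_D$ and $m_D$.
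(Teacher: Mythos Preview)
Your argument is the paper's: attain the Cassinian supremum at $p_0$, pick a second boundary point $b$ with $|p_0-b|\ge|y-b|$, and combine $|p_0-y|\ge d(y)$ with $|p_0-b|/|y-b|\ge1$ to obtain $c_D\le m_D/d(y)$; the paper packages these two estimates as a single factorisation $\frac{|x-y|}{|x-p||p-y|}=\frac{|x-y||p-q|}{|x-p||y-q|}\cdot\frac{|y-q|}{|p-q||p-y|}$, but the content is identical, and your sharpness example (a once-punctured space) is also the paper's. On the existence of $b$ you are in fact more careful than the paper, which simply asserts ``Choose $q\in\partial D$ such that $|p-q|\ge|y-q|$'' without justification; your half-space argument is the right way to fill this (for unbounded $D\subset\mathbb{R}^n$ one has $\infty\in\partial D$ and may take $b=\infty$, while for bounded $D$ the connected unbounded half-space $H$ contains $y\in D$ and hence must cross $\partial D$).
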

\begin{proof}
Let $p\in \partial D$ such that
$$c_D(x,y)=\frac{|x-y|}{|x-p||p-y|}.
$$
Choose $q\in \partial D$ such that $|p-q|\ge |y-q|$. Now,
\begin{eqnarray*}
c_D(x,y) &=& \frac{|x-y|}{|x-p||p-y|}\\
&=& \frac{|x-y||p-q|}{|x-p||y-q|}.\frac{|y-q|}{|p-q||p-y|}\\
&\le & \frac{m_D(x,y)}{d(y)}.
\end{eqnarray*}
Hence we get
$$\delta_D(x,y)=\log(1+m_D(x,y))\ge \log(1+d(y).c_D(x,y))
$$
and the proof is complete. For the sharpness, we consider the punctured space $D_p$. Let $x,y\in D_p$ with
$|x-p|\le |y-p|$. It is clear that $\delta_{D_p}(x,y)=\tilde{j}_{D_p}(x,y)=\log(1+|x-y|/|x-p|)$ and hence the sharpness
follows.
\end{proof}
An immediate corollary to Theorem~\ref{c-delta} is the following inclusion relation.
\begin{corollary}
Let $D\subsetneq \Rn$, $x\in D$, and $t>0$. Then
$$B_{\delta_D}(x,t)\subseteq B_{c_D}(x,R),
$$
where $R=(e^t-1)/d(y)$. The inclusion is sharp.
\end{corollary}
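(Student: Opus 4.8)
The plan is to obtain the inclusion as an immediate consequence of the exponential comparison in Theorem~\ref{c-delta}, and then to confirm that the radius $R$ is best possible by producing an extremal pair in a once-punctured space, where the inequality of that theorem already degenerates to an equality.

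First I would fix an arbitrary $y \in B_{\delta_D}(x,t)$, so that $\delta_D(x,y) < t$. Since $r \mapsto e^r - 1$ is increasing, this yields $e^{\delta_D(x,y)} - 1 < e^t - 1$; dividing through by the positive number $d(y)$ and applying Theorem~\ref{c-delta} gives
$$c_D(x,y) \le \frac{e^{\delta_D(x,y)}-1}{d(y)} < \frac{e^t-1}{d(y)} = R,$$
that is, $y \in B_{c_D}(x,R)$. As $y$ was arbitrary in $B_{\delta_D}(x,t)$, the inclusion $B_{\delta_D}(x,t) \subseteq B_{c_D}(x,R)$ follows.

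For sharpness I would reuse the once-punctured space $D_p = \Rn \setminus \{p\}$ from the proof of Theorem~\ref{c-delta}. Taking $x,y \in D_p$ with $|x-p| \le |y-p|$, one has $d(y) = |y-p|$, $c_{D_p}(x,y) = |x-y|/(|x-p|\,|y-p|)$, and $\delta_{D_p}(x,y) = \tilde{j}_{D_p}(x,y) = \log\!\big(1 + |x-y|/|x-p|\big)$, so that $(e^{\delta_{D_p}(x,y)}-1)/d(y)$ equals $c_{D_p}(x,y)$ exactly. Choosing the pair so that $\delta_{D_p}(x,y) = t$ then forces $c_{D_p}(x,y) = (e^t-1)/d(y) = R$, placing $y$ on $\partial B_{c_{D_p}}(x,R)$; hence $R$ cannot be replaced by any smaller radius.

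The whole argument is short, and no serious obstacle arises once Theorem~\ref{c-delta} is available. The only step demanding a little care is the sharpness computation, where I would need to keep track of the boundary of the punctured space in $\overline{\Rn}$ --- recognising that the Seittenranta supremum collapses to the distance ratio metric (through the pair $\{p,\infty\}$) while the Cassinian supremum is realised at the finite boundary point $p$ --- in order to see that equality, rather than strict inequality, holds for this extremal configuration.
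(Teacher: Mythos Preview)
Your argument is correct and follows essentially the same route as the paper: the inclusion comes straight from Theorem~\ref{c-delta}, and sharpness is verified in a once-punctured space where that inequality becomes an equality. The paper's version merely pins down the extremal $y$ more concretely (on the ray through the puncture and $x$), while your remark about the boundary pair $\{p,\infty\}$ for the Seittenranta supremum makes explicit a point the paper leaves implicit.
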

\begin{proof}
If $\delta_D(x,y)<y$, then by Theorem~\ref{c-delta}, we have $c_D(x,y)<(e^t-1)/d(y)$. For the sharpness, 
choose a point $y\in \partial B_{\delta_D}(x,t)\cap L$, where $L$ is the line passing 
through $0$ and $x$ with $|x|<|y|$. Then
$$\delta_D(x,y)=t=\log\left(1+\frac{|x-y|}{|x|}\right).
$$
Now, 
$$c_D(x,y)=\frac{|x-y|}{|x||y|}=\frac{1}{|y|}(e^t-1)
$$
and hence the proof is complete.
\end{proof}

Again the $\delta_D$-metric is bilipschitz equivalent to the $\tilde{j}_D$-metric. Indeed, we have
\begin{equation}\label{tildej-delta}
\tilde{j}_D\le \delta_D\le 2\tilde{j}_D,
\end{equation}
see, for instance \cite[p.~525]{Sei99}. Hence \eqref{j-tau} along with \eqref{tildej-delta} yield the
following inequality between the $\delta_D$-metric and the $\tilde{\tau}_D$-metric.
\begin{theorem}\label{delta-tau}
Let $x,y\in D\subsetneq \Rn$. Then the following holds true:
$$\frac{1}{4}\delta_D(x,y)\le \tilde{\tau}_D(x,y)\le \delta_D(x,y).
$$
The second inequality is sharp.
\end{theorem}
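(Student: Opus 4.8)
The plan is to obtain both bounds purely by chaining the two bilipschitz comparisons recalled just above, namely \eqref{j-tau} (that is, $\tfrac12\tilde{j}_D\le\tilde{\tau}_D\le\tilde{j}_D$) and \eqref{tildej-delta} (that is, $\tilde{j}_D\le\delta_D\le2\tilde{j}_D$); no new geometry is needed for the inequalities themselves. For the upper bound I would simply compose $\tilde{\tau}_D(x,y)\le\tilde{j}_D(x,y)$ (right half of \eqref{j-tau}) with $\tilde{j}_D(x,y)\le\delta_D(x,y)$ (left half of \eqref{tildej-delta}), giving $\tilde{\tau}_D\le\delta_D$ at once.

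For the lower bound I would run the same two estimates in the opposite direction: the right half of \eqref{tildej-delta} gives $\tilde{j}_D(x,y)\ge\tfrac12\delta_D(x,y)$, and the left half of \eqref{j-tau} gives $\tilde{\tau}_D(x,y)\ge\tfrac12\tilde{j}_D(x,y)$; substituting the former into the latter yields $\tilde{\tau}_D(x,y)\ge\tfrac14\delta_D(x,y)$. This is entirely mechanical, so I do not expect any genuine obstacle in establishing the displayed double inequality.

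The only step requiring thought---and hence the hard part---is the sharpness of the upper inequality, and the idea is to exhibit a configuration in which both component inequalities degenerate to equalities simultaneously. I would work in the once-punctured space $D_0=\Rn\setminus\{0\}$ (regarded in $\overline{\Rn}$ with boundary $\{0,\infty\}$), where, as already noted in the proof of Theorem~\ref{c-delta}, one has $\delta_{D_0}(x,y)=\tilde{j}_{D_0}(x,y)=\log\bigl(1+|x-y|/(|x|\wedge|y|)\bigr)$, while $\tilde{\tau}_{D_0}(x,y)=\log\bigl(1+|x-y|/\sqrt{|x||y|}\bigr)$. Choosing $x,y$ with $|x|=|y|$ forces $|x|\wedge|y|=\sqrt{|x||y|}$, so $\tilde{\tau}_{D_0}(x,y)=\delta_{D_0}(x,y)$ holds exactly, which proves the upper bound is best possible. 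The point worth verifying here is precisely that the two underlying sharpness conditions---equality in the right half of \eqref{j-tau} and equality in the left half of \eqref{tildej-delta}---can be met at the same pair of points, and the equidistant choice $|x|=|y|$ in the punctured space achieves exactly this.
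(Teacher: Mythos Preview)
Your argument is correct and follows essentially the same approach as the paper: the double inequality is obtained by chaining \eqref{j-tau} with \eqref{tildej-delta}, and sharpness of the upper bound is witnessed in the punctured space $D_0$. Your sharpness example (any $x,y\in D_0$ with $|x|=|y|$) is a mild generalization of the paper's specific choice $y=-x$, which gives $\tilde{\tau}_{D_0}(x,-x)=\log 3=\delta_{D_0}(x,-x)$.
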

\begin{proof}
The proof of the inequality follows directly from \eqref{j-tau} and \eqref{tildej-delta}.
For the sharpness of the second inequality, consider the domain $D_0$ and choose $x,y\in D_0$ with $y=-x$.
Then $\tilde{\tau}_{D_0}(x,-x)=\log 3=\delta_{D_0}(x,-x)$.
\end{proof}
The following inclusion relation holds true.
\begin{corollary}
Let $x\in D\subsetneq \Rn$ and $t>0$. Then
$$B_{\delta_D}(x,r)\subseteq B_{\tilde{\tau}_D}(x,t)\subseteq B_{\delta_D}(x,R),
$$
where $r=t$ and $R=4t$.
\end{corollary}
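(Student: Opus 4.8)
The plan is to derive both inclusions directly from the two-sided comparison in Theorem~\ref{delta-tau}, namely $\frac{1}{4}\delta_D(x,y)\le \tilde{\tau}_D(x,y)\le \delta_D(x,y)$. This is a purely formal consequence of how the radii $r=t$ and $R=4t$ are chosen to absorb the constants $1$ and $\frac{1}{4}$ appearing in that theorem, so no new geometric input is needed.

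For the first inclusion $B_{\delta_D}(x,r)\subseteq B_{\tilde{\tau}_D}(x,t)$ with $r=t$, I would take an arbitrary $y\in B_{\delta_D}(x,t)$, so that $\delta_D(x,y)<t$, and invoke the right-hand inequality $\tilde{\tau}_D(x,y)\le \delta_D(x,y)$ to conclude $\tilde{\tau}_D(x,y)<t$, hence $y\in B_{\tilde{\tau}_D}(x,t)$.

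For the second inclusion $B_{\tilde{\tau}_D}(x,t)\subseteq B_{\delta_D}(x,R)$ with $R=4t$, I would take $y\in B_{\tilde{\tau}_D}(x,t)$, so $\tilde{\tau}_D(x,y)<t$, and apply the left-hand inequality $\frac{1}{4}\delta_D(x,y)\le \tilde{\tau}_D(x,y)$, which gives $\delta_D(x,y)<4t=R$ and therefore $y\in B_{\delta_D}(x,R)$.

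There is essentially no obstacle here: the entire content is packaged in Theorem~\ref{delta-tau}, and the corollary is the standard translation of a bilipschitz-type metric comparison into an inclusion of metric balls. The only point worth a moment's care is bookkeeping the direction of each inequality so that the correct radius ($t$ versus $4t$) is matched to the correct side of the chain, but this is immediate once the two cases are written out as above.

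\begin{proof}
Both inclusions follow directly from Theorem~\ref{delta-tau}. Let $y\in B_{\delta_D}(x,r)$ with $r=t$, so that $\delta_D(x,y)<t$. By the second inequality of Theorem~\ref{delta-tau} we have $\tilde{\tau}_D(x,y)\le \delta_D(x,y)<t$, whence $y\in B_{\tilde{\tau}_D}(x,t)$. This proves $B_{\delta_D}(x,r)\subseteq B_{\tilde{\tau}_D}(x,t)$.

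Conversely, let $y\in B_{\tilde{\tau}_D}(x,t)$, so that $\tilde{\tau}_D(x,y)<t$. By the first inequality of Theorem~\ref{delta-tau} we have $\frac{1}{4}\delta_D(x,y)\le \tilde{\tau}_D(x,y)<t$, and therefore $\delta_D(x,y)<4t=R$, which gives $y\in B_{\delta_D}(x,R)$. Hence $B_{\tilde{\tau}_D}(x,t)\subseteq B_{\delta_D}(x,R)$, completing the proof.
\end{proof}
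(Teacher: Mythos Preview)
Your proof is correct and follows exactly the paper's approach: the paper's proof simply reads ``Proof follows from Theorem~\ref{delta-tau},'' and you have merely written out the two routine implications that this entails.
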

\begin{proof}
Proof follows from Theorem~\ref{delta-tau}.
\end{proof}

Theorem~\ref{tau-u-g} and Theorem~\ref{delta-tau} together yield the following:
\begin{corollary}\label{delta-u}
Let $x,y\in D\subsetneq \Rn$. Then we have
$$\frac{1}{2}\delta_D(x,y)\le u_D(x,y)\le 4\delta_D(x,y).
$$
\end{corollary}
Corollary~\ref{delta-u} leads to the following inclusion relation.
\begin{corollary}
Let $x\in D\subsetneq \Rn$ and $t>0$. Then
$$B_{\delta_D}(x,r)\subseteq B_{u_D}(x,t)\subseteq B_{\delta_D}(x,R),
$$
where $r=t/4$ and $R=2t$.
\end{corollary}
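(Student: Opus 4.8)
The plan is to derive both inclusions directly from the two-sided comparison in Corollary~\ref{delta-u}, namely $\tfrac{1}{2}\delta_D(x,y)\le u_D(x,y)\le 4\delta_D(x,y)$. Since each inclusion merely asserts that one metric ball sits inside another, the entire argument reduces to translating these pointwise inequalities into statements about the defining radii, exactly as was done for the inclusion corollaries following Theorem~\ref{thm-rho-u} and Theorem~\ref{tau-u-g}.

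For the first inclusion $B_{\delta_D}(x,r)\subseteq B_{u_D}(x,t)$ with $r=t/4$, I would take an arbitrary $y\in B_{\delta_D}(x,r)$, so that $\delta_D(x,y)<t/4$, and apply the right-hand inequality $u_D(x,y)\le 4\delta_D(x,y)$ to conclude $u_D(x,y)<t$, that is, $y\in B_{u_D}(x,t)$. For the second inclusion $B_{u_D}(x,t)\subseteq B_{\delta_D}(x,R)$ with $R=2t$, I would take $y\in B_{u_D}(x,t)$, so $u_D(x,y)<t$, and apply the left-hand inequality $\tfrac{1}{2}\delta_D(x,y)\le u_D(x,y)$ to obtain $\delta_D(x,y)<2t=R$, that is, $y\in B_{\delta_D}(x,R)$.

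There is essentially no obstacle here: the result is an immediate consequence of the sandwich bound, and no sharpness assertion is made, so no extremal example is required. The only point demanding any care is the bookkeeping of the radii---matching the factor $4$ in the upper bound to the choice $r=t/4$, and the factor $\tfrac{1}{2}$ in the lower bound to the choice $R=2t$---which is routine.
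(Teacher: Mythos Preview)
Your proposal is correct and follows exactly the approach the paper indicates: the corollary is stated immediately after Corollary~\ref{delta-u} with the remark that it ``leads to the following inclusion relation,'' and no further proof is given. Your translation of the two-sided bound into the two ball inclusions, with the correct matching of factors to radii, is precisely what is intended.
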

Hence, as a consequence to Theorem~\ref{c-delta}, we have
\begin{corollary}\label{cor-cD-tau-u}
Let $x,y\in D\subsetneq \Rn$. Then we have
$$c_D(x,y)\le \frac{e^{4\tilde{\tau}_D(x,y)}-1}{d(y)} \mbox{ and } c_D(x,y)\le \frac{e^{2u_D(x,y)}-1}{d(y)}.
$$
\end{corollary}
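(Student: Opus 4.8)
The plan is to chain Theorem~\ref{c-delta} with the upper bounds on $\delta_D$ furnished by Theorem~\ref{delta-tau} and Corollary~\ref{delta-u}, exploiting the monotonicity of the exponential. The only genuine content is already contained in Theorem~\ref{c-delta}, which asserts
$$c_D(x,y)\le \frac{e^{\delta_D(x,y)}-1}{d(y)};$$
everything that remains is a substitution in the exponent.

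First I would extract from Theorem~\ref{delta-tau} its left-hand inequality $\frac{1}{4}\delta_D(x,y)\le \tilde{\tau}_D(x,y)$, which rearranges to $\delta_D(x,y)\le 4\tilde{\tau}_D(x,y)$. Likewise, from the left-hand inequality of Corollary~\ref{delta-u}, namely $\frac{1}{2}\delta_D(x,y)\le u_D(x,y)$, I obtain $\delta_D(x,y)\le 2u_D(x,y)$. These are exactly the two upper bounds on $\delta_D$ needed to replace it inside the exponential.

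Next, since $t\mapsto e^{t}-1$ is increasing, feeding these bounds into the inequality of Theorem~\ref{c-delta} and dividing by the common positive denominator $d(y)$ yields
$$c_D(x,y)\le \frac{e^{\delta_D(x,y)}-1}{d(y)}\le \frac{e^{4\tilde{\tau}_D(x,y)}-1}{d(y)}$$
and, in the same manner,
$$c_D(x,y)\le \frac{e^{\delta_D(x,y)}-1}{d(y)}\le \frac{e^{2u_D(x,y)}-1}{d(y)},$$
which are precisely the two asserted inequalities.

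I do not anticipate any obstacle here: the statement is a purely formal consequence of the comparison $c_D\le (e^{\delta_D}-1)/d(y)$ together with the already-established bounds $\delta_D\le 4\tilde{\tau}_D$ and $\delta_D\le 2u_D$. The only point worth noting is that the numerator $e^{t}-1$ is monotone and the denominator $d(y)$ is unchanged by the substitution, so the two estimates follow at once.
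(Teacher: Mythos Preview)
Your proof is correct and follows exactly the same route as the paper: combine Theorem~\ref{c-delta} with the left-hand inequalities of Theorem~\ref{delta-tau} and Corollary~\ref{delta-u}, using monotonicity of $t\mapsto e^t-1$.
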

\begin{proof}
The first inequality follows from Theorem~\ref{c-delta} and Lemma~\ref{delta-tau}, whereas the second inequality
follows from Theorem~\ref{c-delta} and Corollary~\ref{delta-u}.
\end{proof}

Now, we compare the $\tilde{\tau}_D$-metric with 
the triangular ratio metric, $s_D$, defined in a proper subdomain $D$ of $\Rn$ by
$$s_D(x,y)=\sup_{p\in \partial D} \frac{|x-y|}{|x-p|+|p-y|}, \quad x,y\in D.
$$
Geometrically, the triangular ratio metric can be viewed by taking the maximal ellipse in $D$ with foci at $x$ and $y$ in the similar fashion as in the geometric definition the Apollonian metric \cite{Has03}. For more details on $s_D(x,y)$ we refer \cite{CHKV15}.

\begin{theorem}\label{tau-sD}
Let $D\subsetneq \Rn$ and $x,y\in D$. Then
$$\tilde{\tau}_D(x,y)\ge (\log 3) s_D(x,y).
$$
The inequality is sharp.
\end{theorem}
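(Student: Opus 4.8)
The plan is to reduce both quantities to a single-point comparison and then dispose of the remainder with a one-variable concavity inequality. Fix any $p\in\partial D$ and abbreviate $a=|x-p|$, $b=|p-y|$. The arithmetic--geometric mean inequality $\sqrt{ab}\le (a+b)/2$ gives the pointwise bound
\[
\frac{|x-y|}{\sqrt{|x-p||p-y|}}\ \ge\ \frac{2|x-y|}{|x-p|+|p-y|}
\]
for every $p\in\partial D$. Passing to the supremum over $p$ on both sides yields
\[
\sup_{p\in\partial D}\frac{|x-y|}{\sqrt{|x-p||p-y|}}\ \ge\ 2\sup_{p\in\partial D}\frac{|x-y|}{|x-p|+|p-y|}\ =\ 2\,s_D(x,y).
\]
I would stress that the two suprema need not be attained at the same boundary point, but this is harmless: from the pointwise bound, for every $p_0$ one has $\sup_p\frac{|x-y|}{\sqrt{|x-p||p-y|}}\ge \frac{2|x-y|}{|x-p_0|+|p_0-y|}$, and taking the supremum over $p_0$ gives the displayed inequality.

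Using the monotonicity of the logarithm and the definition of $\tilde{\tau}_D$, this produces
\[
\tilde{\tau}_D(x,y)\ =\ \log\!\left(1+\sup_{p\in\partial D}\frac{|x-y|}{\sqrt{|x-p||p-y|}}\right)\ \ge\ \log\bigl(1+2\,s_D(x,y)\bigr).
\]
It then remains to prove the elementary inequality $\log(1+2t)\ge(\log 3)\,t$ for $t\in[0,1]$; here $t=s_D(x,y)$ indeed lies in $[0,1]$ since the triangle inequality forces $s_D\le 1$. The function $g(t)=\log(1+2t)$ is strictly concave on $[0,1]$, its second derivative being $-4/(1+2t)^2<0$, so it lies above each of its chords. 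In particular it lies above the chord joining $(0,0)=(0,g(0))$ to $(1,\log 3)=(1,g(1))$, which is exactly the line $t\mapsto(\log 3)\,t$. This gives $\log(1+2t)\ge(\log 3)\,t$ on $[0,1]$, and combining with the previous display proves $\tilde{\tau}_D(x,y)\ge(\log 3)\,s_D(x,y)$.

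For sharpness I would produce a configuration making both steps equalities at once. The AM--GM step is tight when $|x-p|=|p-y|$, and the concavity step is tight at the endpoint $t=1$, i.e.\ when $s_D(x,y)=1$; both hold precisely when $\partial D$ contains the midpoint of the segment $[x,y]$. The cleanest witness is the once-punctured space $D=D_0=\Rn\setminus\{0\}$ with $y=-x$: the single boundary point $0$ is the midpoint of $[x,-x]$, so
\[
s_{D_0}(x,-x)=\frac{2|x|}{|x|+|x|}=1,\qquad \tilde{\tau}_{D_0}(x,-x)=\log\!\left(1+\frac{2|x|}{|x|}\right)=\log 3,
\]
giving exact equality $\tilde{\tau}_{D_0}(x,-x)=(\log 3)\,s_{D_0}(x,-x)$.

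I expect no serious obstacle in this argument. The only delicate point is the observation that the suprema defining $\tilde{\tau}_D$ and $s_D$ may be realized at different boundary points, which the passage from the pointwise bound to the bound between suprema handles. The auxiliary estimate $\log(1+2t)\ge(\log 3)\,t$ is the technical core, and the concavity/chord argument settles it cleanly.
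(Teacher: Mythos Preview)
Your proof is correct and follows essentially the same route as the paper: AM--GM to pass from the $\tilde{\tau}_D$-ratio to the $s_D$-ratio, then the elementary one-variable bound $\log(1+2t)\ge(\log 3)\,t$ on $[0,1]$, with the same sharpness example $D_0$, $y=-x$. The only cosmetic difference is that the paper names this last step as Bernoulli's inequality $\log(1+ax)\ge a\log(1+x)$ (with $a=t$, $x=2$) rather than arguing via concavity and the chord.
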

\begin{proof}
From AM-GM inequality, it follows that
$$\frac{1}{\sqrt{|x-p||y-p|}}\ge \frac{2}{|x-p|+|y-p|}.
$$
Now,
\begin{eqnarray*}
\tilde{\tau}_D(x,y) &\ge & \log\left(1+\frac{|x-y|}{\sqrt{|x-p||y-p|}}\right)\\
&\ge & \log\left(1+\frac{2|x-y|}{|x-p|+|y-p|}\right)\\
&\ge & \frac{|x-y|}{|x-p|+|y-p|} \log 3
\end{eqnarray*}
holds for all $p\in \partial D$. Here the last inequality follows from the well known Bernoulli's 
inequality:
$$\log(1+ax)\ge a \log(1+x)\quad \mbox{ for } a\in (0,1), x>0.
$$
 In particular, we have $\tilde{\tau}_D(x,y)\ge (\log 3) s_D(x,y)$. To prove the sharpness, 
consider the domain $D_0$ and $x,y\in D_0$ with $y=-x$. Then 
$\tilde{\tau}_D(x,-x)=\log 3$ and $s_D(x,-x)=1$.
\end{proof}

\begin{remark}
Combining Theorem~\ref{tau-sD} and \cite[Theorem~4.3]{Ibr16}, one can obtain Theorem~3.3 of \cite{CHKV15}.
\end{remark}
Theorem~\ref{tau-sD} leads to the following inclusion relation.
\begin{corollary}
Let $x\in D\subsetneq \Rn$ and $t>0$. Then 
$$B_{\tilde{\tau}_D}(x,t)\subseteq B_{s_D}(x,R), 
$$
where $R=t/\log 3$. The inclusion is sharp.
\end{corollary}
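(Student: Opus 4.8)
The plan is to read off the inclusion directly from the pointwise estimate in Theorem~\ref{tau-sD} and then to exhibit an extremal configuration forcing $R=t/\log 3$ to be optimal. For the inclusion itself I would take $y\in B_{\tilde{\tau}_D}(x,t)$, so that $\tilde{\tau}_D(x,y)<t$, and apply Theorem~\ref{tau-sD} in the rearranged form $s_D(x,y)\le \tilde{\tau}_D(x,y)/\log 3$. This immediately gives
$$s_D(x,y)\le \frac{\tilde{\tau}_D(x,y)}{\log 3}<\frac{t}{\log 3}=R,$$
so $y\in B_{s_D}(x,R)$; no further work is needed, the inclusion being a one-line consequence of the theorem.

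For the assertion that $R$ cannot be decreased I would reuse the extremal pair from the sharpness proof of Theorem~\ref{tau-sD}: the once-punctured space $D=D_0$ with (nearly) antipodal points. Placing $x$ and $y$ on a common sphere about the origin with angle $\theta$ between them, the only boundary point is $0$, so the scale-invariant formulas yield $s_{D_0}(x,y)=\sin(\theta/2)$ and $\tilde{\tau}_{D_0}(x,y)=\log(1+2\sin(\theta/2))$. As $\theta\to\pi^-$ these tend to $s=1$ and $\tilde{\tau}=\log 3$, so for $t=\log 3$ (hence $R=1$) the points with $\theta<\pi$ satisfy $\tilde{\tau}_{D_0}(x,y)<t$ while $s_{D_0}(x,y)=\sin(\theta/2)\to 1=R$. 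This shows that no radius smaller than $R$ can work and simultaneously pins down the constant $1/\log 3$.

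The delicate point is sharpness for a general $t$ rather than only the critical value $t=\log 3$. Within the antipodal family the ratio $\tilde{\tau}/s=\log(1+2u)/u$, with $u=\sin(\theta/2)$, is strictly decreasing and attains its minimum $\log 3$ only as $u\to 1$, so genuine equality in Theorem~\ref{tau-sD} is realized only at $t=\log 3$; moreover scale invariance of $\tilde{\tau}_{D_0}$ and $s_{D_0}$ prevents me from shifting $t$ by rescaling. To cover every $t>0$ I would instead look for a configuration in which the supremizing boundary point $p$ has $|x-p|=|y-p|$, forcing equality in the AM--GM step, while the quotient $|x-y|/(|x-p|+|y-p|)$ is driven to $1$, forcing equality in the Bernoulli step, and then check that along such a family $\tilde{\tau}_D(x,y)$ ranges over all positive values with $s_D(x,y)$ staying within $\varepsilon$ of $t/\log 3$. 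Building such a family inside a suitable proper subdomain, and verifying that both equality conditions are approached at once, is the step demanding the most care.
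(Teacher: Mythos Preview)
Your argument for the inclusion is exactly the paper's: apply Theorem~\ref{tau-sD} to get $s_D(x,y)\le \tilde{\tau}_D(x,y)/\log 3<t/\log 3$. For sharpness the paper does precisely what you do in your second paragraph, taking $D=D_0$, $t=\log 3$, and the antipodal pair $y=-x$ (it works directly with the boundary point rather than approaching it via $\theta\to\pi^-$, but the content is the same).

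Your third paragraph raises a genuine issue, but it is a critique of the paper's formulation rather than a gap in your argument: the paper, too, establishes sharpness only at the single value $t=\log 3$ and does not attempt to show the radius $R=t/\log 3$ is optimal for every $t>0$. In fact your observation that $s_D\le 1$ forces the optimal radius for $t>\log 3$ to be at most $1<t/\log 3$ shows the stronger interpretation cannot hold, so the ``step demanding the most care'' you describe is not merely delicate but impossible. You may therefore stop after your second paragraph; the sharpness being asserted is the weak one (some admissible $t$ realises equality), and your antipodal example already delivers it.
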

\begin{proof}
It follows from Theorem~\ref{tau-sD} that for $\tilde{\tau}_D(x,y)<t$, 
$s_D(x,y)<t/\log 3$. Hence, $B_{\tilde{\tau}_D}(x,t)\subseteq B_{s_D}(x,R)$ with $R=t/\log 3$.

To prove the sharpness part, consider the domain $D=D_0$ and $t=\log 3$. Then we have $R=1$ and
$$\tilde{\tau}_{D_0}(x,y)=\log 3 \iff |x-y|=2\sqrt{|x||y|} 
$$
and hence
$$s_{D_0}(x,y)=\frac{2\sqrt{|x||y|}}{|x|+|y|}.
$$
To show $s_{D_0}(x,y)=1$, we need to choose points $x$ and $y$ such that $|x|=|y|$. This implies $x$ and $y$ are co-linear. i.e. $y=-x$. From the definition of the $\tilde{\tau}_D$ metric it is clear that the point
$-x$ lies on the sphere $\partial B_{\tilde{\tau}_{D_0}}(x,\log 3)$. Now, for any $x\in D_0$, choose 
$y\in \partial B_{\tilde{\tau}_{D_0}}(x,\log 3)\cap L$, where $L$ is the line passing through $0$ and $x$
with $|x|=|y|$. Then
$$\tilde{\tau}_{D_0}(x,y)=\log 3 \iff s_{D_0}(x,y)=1.
$$
Hence, the proof is complete.
\end{proof}

Another consequence of Theorem~\ref{tau-sD} leads to the following corollary.

\begin{corollary}\label{cor-sD-u}
Let $D\subsetneq \Rn$. Then for all $x,y\in D$ we have
$$s_D(x,y)\le \frac{1}{\log 9} u_D(x,y).
$$
\end{corollary}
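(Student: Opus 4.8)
The plan is to chain together the two comparison results established earlier in the paper; the corollary is a pure composition of inequalities. First I would invoke Theorem~\ref{tau-sD}, which furnishes the lower bound $\tilde{\tau}_D(x,y)\ge (\log 3)\,s_D(x,y)$ valid for all $x,y\in D$. Rewriting this gives
$$s_D(x,y)\le \frac{1}{\log 3}\,\tilde{\tau}_D(x,y),$$
so the task reduces to controlling $\tilde{\tau}_D$ from above by a suitable multiple of $u_D$.

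For that second step I would appeal to the left-hand inequality of Theorem~\ref{tau-u-g}, namely $2\tilde{\tau}_D(x,y)\le u_D(x,y)$, which immediately yields $\tilde{\tau}_D(x,y)\le \tfrac{1}{2}u_D(x,y)$. Substituting this into the previous display produces
$$s_D(x,y)\le \frac{1}{\log 3}\cdot\frac{1}{2}\,u_D(x,y)=\frac{1}{2\log 3}\,u_D(x,y).$$

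Finally I would simplify the constant by noting $\log 9=\log 3^2=2\log 3$, which converts the bound into the asserted form $s_D(x,y)\le \frac{1}{\log 9}\,u_D(x,y)$. Since both inequalities feeding into the argument hold for every pair $x,y\in D\subsetneq \Rn$, so does the conclusion, with no restriction on the domain or on the location of the points.

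There is essentially no genuine obstacle in this proof: it is a two-line concatenation of Theorem~\ref{tau-sD} with the first inequality of Theorem~\ref{tau-u-g}, and the only arithmetic input is the identity $\log 9=2\log 3$ that reconciles the two constants. No extremal configuration is needed, since the statement does not claim sharpness.
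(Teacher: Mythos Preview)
Your proposal is correct and follows precisely the same approach as the paper, which simply states that the proof follows from Theorem~\ref{tau-u-g} and Theorem~\ref{tau-sD}. You have merely made explicit the chaining of the two inequalities and the elementary identity $\log 9=2\log 3$.
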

\begin{proof}
The proof follows from Theorem~\ref{tau-u-g} and Theorem~\ref{tau-sD}.
\end{proof}

Next, we discuss the inclusion properties associated with the $\tilde{\tau}_D$-metric and the half-Apollonian metric balls. The half-Apollonian metric, $\eta_D$, of a domain $D\subsetneq \Rn$ is defined by
$$\eta_D(x,y)=\sup_{p\in \partial D} \left|\log \frac{|x-p|}{|y-p|}\right|, \quad x,y\in D.
$$
The $\eta_D$-metric was introduced by H\"ast\"o and Linden in \cite{HL04}. 
It is now appropriate to recall the following result by Seittenranta.

\begin{lemma}\cite[Theorem~3.11]{Sei99}\label{sei3.11}
Let $D\subset \overline{\Rn}$ be an open set with ${\rm card }\,\, \partial D\ge 2$. Then
$$\alpha_D(x,y)\le \delta_D(x,y)\le \log (e^{\alpha_D}+2).
$$
The inequalities give the best possible bounds for $\delta_D$ expressed in terms of
$\alpha_D$ only.
\end{lemma}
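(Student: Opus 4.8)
The plan is to derive both bounds from a single classical tool, \emph{Ptolemy's inequality}, which holds for any four points $a,x,b,y\in\Rn$ (indeed in any inner-product space):
$$|a-x|\,|b-y|\le |a-y|\,|b-x|+|a-b|\,|x-y|.$$
Since $D\subset\Rnbar$, I would first dispose of the case $\infty\in\partial D$ by a Möbius normalization, using that both $\delta_D$ and $\alpha_D$ are Möbius invariant, which reduces everything to finite configurations. I also recall that the Apollonian metric factorizes, because its two defining suprema are taken over $\partial D$ independently:
$$e^{\alpha_D(x,y)}=\Big(\sup_{a\in\partial D}\frac{|a-x|}{|a-y|}\Big)\Big(\sup_{b\in\partial D}\frac{|b-y|}{|b-x|}\Big)=\sup_{a,b\in\partial D}\frac{|a-x|\,|b-y|}{|a-y|\,|b-x|}.$$
Relabeling the dummy variables shows this also equals $\sup_{a,b}\frac{|a-y|\,|b-x|}{|a-x|\,|b-y|}$, a symmetry I will exploit for the upper bound. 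The hypothesis $\mathrm{card}\,\partial D\ge 2$ guarantees these suprema range over enough points for $\alpha_D$ to be a genuine metric.

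For the lower bound $\alpha_D\le\delta_D$, I would divide the displayed Ptolemy inequality by $|a-y|\,|b-x|$ to get
$$\frac{|a-x|\,|b-y|}{|a-y|\,|b-x|}\le 1+\frac{|a-b|\,|x-y|}{|a-y|\,|b-x|}\le 1+m_D(x,y),$$
and then take the supremum over $a,b\in\partial D$ on the left, obtaining $e^{\alpha_D(x,y)}\le 1+m_D(x,y)=e^{\delta_D(x,y)}$. For the upper bound I would re-apply Ptolemy with the vertices permuted (cyclic order $a,x,b,y$ rather than $a,b,x,y$) so that the product $|a-b|\,|x-y|$ sits on the left:
$$|a-b|\,|x-y|\le |x-a|\,|y-b|+|a-y|\,|b-x|.$$
Dividing by $|x-a|\,|y-b|$ and recognizing the last ratio as $\tfrac{|a-y|}{|a-x|}\cdot\tfrac{|b-x|}{|b-y|}$, I take the supremum over $a,b$ and invoke the symmetric factorization above to conclude $m_D(x,y)\le 1+e^{\alpha_D(x,y)}$, hence $\delta_D(x,y)=\log(1+m_D(x,y))\le\log\big(2+e^{\alpha_D(x,y)}\big)$.

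The two divisions and suprema are routine; the part demanding the most care is the sharpness (``best possible'') assertion. Here I would test collinear (or concyclic) configurations, placing $x,y$ together with the extremal boundary points on a common line so that the Ptolemy inequality degenerates to an equality, which pins down the lower bound exactly. For the upper estimate the delicate point is showing the additive constant $2$ cannot be lowered; I would examine a once-punctured or two-point domain and let the configuration degenerate, checking that $\log(2+e^{\alpha_D})$ is approached. A secondary technical nuisance throughout is the bookkeeping for points at $\infty$, which I handle once at the outset via the Möbius normalization so that the interior estimates may be carried out entirely with Euclidean distances.
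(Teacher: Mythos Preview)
The paper does not prove this lemma at all: it is quoted verbatim from Seittenranta \cite[Theorem~3.11]{Sei99} and used as a black box, so there is no ``paper's own proof'' to compare against. Your argument via Ptolemy's inequality is correct and is, in fact, essentially Seittenranta's original proof. The two applications of Ptolemy---one yielding $e^{\alpha_D}\le 1+m_D$ and the permuted one yielding $m_D\le 1+e^{\alpha_D}$---are exactly the mechanism behind the result, and your handling of the supremum (bounding the $a,b$-dependent right-hand side by $1+e^{\alpha_D}$ \emph{before} taking the supremum on the left) is the right order of operations.

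One small caution on the sharpness sketch: a once-punctured space $\Rn\setminus\{p\}$ has $\mathrm{card}\,\partial D=1$ in $\Rn$, so strictly speaking you want the twice-punctured space (or, equivalently, work in $\Rnbar$ where $\infty$ is a second boundary point). A clean extremal configuration for the upper bound is $D=\Rn\setminus\{-e_1,e_1\}$ with $x=e_2$, $y=-e_2$: here $\alpha_D(x,y)=0$ while $m_D(x,y)=2$, so $\delta_D(x,y)=\log 3=\log(2+e^{0})$, and the constant $2$ is seen to be best possible with equality rather than only in a limit. For the lower bound, equality $\alpha_D=\delta_D$ holds throughout when $D=\Bn$ (both coincide with $\rho_{\Bn}$), which is simpler than arranging collinear degenerations.
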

Here the quantity $\alpha_D$ represents the Apollonian metric, introduced by Beardon in \cite{Bea98}.
As a special case of Lemma~\ref{sei3.11}, the following result holds true, which is proved in \cite{IS}.

\begin{lemma}\label{tau-eta}
Let $D\subsetneq \Rn$ and $x,y\in D$. Then
$$ \frac{1}{2} \eta_D(x,y) \le \tilde{\tau}_D(x,y) \le \log(2+e^{\eta_D(x,y)}).
$$
Both the inequalities are sharp.
\end{lemma}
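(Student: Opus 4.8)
The plan is to establish the two inequalities separately, in each case first bounding the relevant quantity at a single boundary point $p\in\partial D$ by means of the triangle inequality together with a one-variable elementary estimate, and only afterwards taking the supremum over $p$. One could instead try to read the statement off as a genuine ``special case'' of Lemma~\ref{sei3.11} by combining it with Theorem~\ref{delta-tau}, but the constants produced that way are weaker than the asserted $\tfrac12$ and $\log(2+e^{\eta_D})$, so I would argue directly.

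For the lower bound $\tfrac12\eta_D\le\tilde\tau_D$, I would fix $p\in\partial D$, assume without loss of generality that $|x-p|\ge|y-p|$, and put $t=\sqrt{|x-p|/|y-p|}\ge1$. The triangle inequality $|x-y|\ge|x-p|-|y-p|$ gives
\[
\frac{|x-y|}{\sqrt{|x-p|\,|y-p|}}\ \ge\ \frac{|x-p|-|y-p|}{\sqrt{|x-p|\,|y-p|}}\ =\ t-\frac1t .
\]
Hence $\log\!\bigl(1+\tfrac{|x-y|}{\sqrt{|x-p||y-p|}}\bigr)\ge\log\bigl(1+t-\tfrac1t\bigr)\ge\log t=\tfrac12\bigl|\log\tfrac{|x-p|}{|y-p|}\bigr|$, the middle step being the elementary fact that $1+t-\tfrac1t\ge t$ for $t\ge1$. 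Since $\log(1+\cdot)$ is increasing, taking the supremum over $p$ commutes with the logarithm and yields exactly $\tilde\tau_D(x,y)\ge\tfrac12\eta_D(x,y)$.

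For the upper bound $\tilde\tau_D\le\log(2+e^{\eta_D})$, it suffices to show $\sup_{p}\tfrac{|x-y|}{\sqrt{|x-p||y-p|}}\le1+e^{\eta_D}$. Fixing $p$ and writing $s=|x-p|/|y-p|$, the triangle inequality $|x-y|\le|x-p|+|y-p|$ gives
\[
\frac{|x-y|}{\sqrt{|x-p|\,|y-p|}}\ \le\ \sqrt{s}+\frac{1}{\sqrt{s}} .
\]
Because $|\log s|\le\eta_D$, the convex quantity $\sqrt s+1/\sqrt s$ is largest at the endpoints $s=e^{\pm\eta_D}$, so it is at most $v+1/v$ with $v=e^{\eta_D/2}\ge1$; the identity $1+v^2-(v+1/v)=(1+v^2)(v-1)/v\ge0$ then gives $\sqrt s+1/\sqrt s\le1+v^2=1+e^{\eta_D}$, a bound independent of $p$. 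Taking the supremum over $p$ and applying $\log(1+\cdot)$ completes the estimate.

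Finally I would settle sharpness in the once-punctured space $D_0=\Rn\setminus\{0\}$, where both suprema reduce to the single point $0$. For the upper bound the antipodal choice $y=-x$ gives $\eta_{D_0}(x,-x)=0$ and $\tilde\tau_{D_0}(x,-x)=\log3=\log(2+e^{0})$, so equality is attained. For the lower bound the collinear points $x=te_1$, $y=e_1$ with $t\to\infty$ give $\eta_{D_0}=\log t$ and $\tilde\tau_{D_0}=\log(1+\sqrt t-1/\sqrt t)$, whence $\tilde\tau_{D_0}/(\tfrac12\eta_{D_0})\to1$. The chains of inequalities themselves are routine; the genuinely delicate point, which is where I expect the real work to lie, is this sharpness analysis, since the upper bound is in fact quite loose when $\eta_D$ is large and is tight only in the degenerate regime $\eta_D\to0$, while the lower bound is approached but never attained. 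Confirming that these two configurations really realize the extremal behavior is the step I would treat most carefully.
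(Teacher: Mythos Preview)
Your argument is correct. The only slip is terminological: the function $s\mapsto\sqrt{s}+1/\sqrt{s}$ is not convex for all $s>0$ (its second derivative changes sign at $s=3$), so ``convex'' is the wrong word. What you actually need, and what you in effect use, is that this function is symmetric under $s\mapsto 1/s$ and strictly increasing on $[1,\infty)$, so its maximum on $[e^{-\eta_D},e^{\eta_D}]$ is attained at the endpoints. Equivalently, writing $s=e^u$ turns the expression into $2\cosh(u/2)$, which \emph{is} convex and even in $u$; either phrasing repairs the sentence without touching the rest of the proof. The sharpness examples are fine.

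As to comparison with the paper: the paper does not prove Lemma~\ref{tau-eta} at all. It records the statement, remarks that it is the half-Apollonian analogue of Seittenranta's inequality $\alpha_D\le\delta_D\le\log(e^{\alpha_D}+2)$ (Lemma~\ref{sei3.11}), and defers the proof to the manuscript \cite{IS}. Your direct, self-contained argument---pointwise estimates at a fixed boundary point via the triangle inequality, followed by a one-variable bound, then the supremum---is exactly the natural route and presumably what \cite{IS} does as well. You were right not to try to deduce the lemma from Theorem~\ref{delta-tau} and Lemma~\ref{sei3.11}: that chain would give only $\tfrac14\eta_D\le\tfrac12\alpha_D\le\tfrac12\delta_D\le 2\tilde\tau_D$ on the left, losing the sharp constant.
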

Lemma~\ref{tau-eta} obtains the following inclusion property.

\begin{corollary}
Let $D\subsetneq \Rn$ and $x\in D$ and $t>0$. Then the following inclusion property holds true:
$$B_{\eta_D}(x,r)\subseteq B_{\tilde{\tau}_D}(x,t)\subseteq B_{\eta_D}(x,R).
$$
Here $r=\log(e^t-2)$ and $R=2t$. The radii $r$ and $R$ are best possible.
\end{corollary}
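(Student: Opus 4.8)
The plan is to obtain both inclusions as direct exponential rearrangements of the two-sided estimate in Lemma~\ref{tau-eta}, in the same spirit as the inclusion corollaries deduced earlier from Theorem~\ref{tau-u-g} and Theorem~\ref{delta-tau}.

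For the inclusion $B_{\eta_D}(x,r)\subseteq B_{\tilde{\tau}_D}(x,t)$ I would use the upper bound $\tilde{\tau}_D(x,y)\le \log(2+e^{\eta_D(x,y)})$. Taking $y\in B_{\eta_D}(x,r)$ with $r=\log(e^t-2)$ gives $e^{\eta_D(x,y)}<e^r=e^t-2$, and substituting into the upper bound yields $\tilde{\tau}_D(x,y)<\log(2+e^t-2)=t$, so $y\in B_{\tilde{\tau}_D}(x,t)$. For the reverse inclusion $B_{\tilde{\tau}_D}(x,t)\subseteq B_{\eta_D}(x,R)$ I would use the lower bound in the form $\eta_D(x,y)\le 2\tilde{\tau}_D(x,y)$: if $\tilde{\tau}_D(x,y)<t$ then $\eta_D(x,y)<2t=R$. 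These two steps are purely computational once Lemma~\ref{tau-eta} is in hand.

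The substantive part is the optimality of $r$ and $R$, which I would reduce to the sharpness already asserted in Lemma~\ref{tau-eta}. For $R$, the sharpness of $\tfrac{1}{2}\eta_D\le\tilde{\tau}_D$ supplies configurations (e.g. in $D_0$) along which $\eta_D/\tilde{\tau}_D\to 2$; choosing such points with $\tilde{\tau}_D(x,y)$ approaching $t$ from below makes $\eta_D(x,y)$ approach $2t$, so no radius smaller than $2t$ can serve. Symmetrically, for $r$ the sharpness of $\tilde{\tau}_D\le\log(2+e^{\eta_D})$ produces points where $\tilde{\tau}_D(x,y)$ is as close as desired to $\log(2+e^{\eta_D(x,y)})$; pushing $\eta_D(x,y)$ slightly above $\log(e^t-2)$ then forces $\tilde{\tau}_D(x,y)$ above $t$, so $r$ cannot be enlarged.

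The main point to watch is that these extremal sequences must be exhibited inside one fixed domain (the punctured space $D_0$, together with $\Bn$, as in the sharpness arguments underlying Lemma~\ref{tau-eta}), so that $r$ and $R$ are genuinely best possible for a single $(D,x,t)$ rather than merely in a cross-domain limit; I would also note in passing that $r=\log(e^t-2)$ is a legitimate positive radius only for $t>\log 3$, which is the regime in which the statement is of interest.
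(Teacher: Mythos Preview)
Your derivation of the two inclusions is exactly the paper's: both come by reading off the two sides of Lemma~\ref{tau-eta} and rearranging, with $r=\log(e^t-2)$ coming from the upper bound $\tilde\tau_D\le\log(2+e^{\eta_D})$ and $R=2t$ from the lower bound $\tfrac12\eta_D\le\tilde\tau_D$.

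On sharpness your write-up is actually more complete than the paper's. The paper only sketches the optimality of $R$, pointing to the punctured space $D_0$ with $t=\log 3$ and invoking the analogous argument from Corollary~\ref{u-tau-inclusion}; it does not separately justify the optimality of $r$. You treat both radii, and you correctly flag two points the paper glosses over: that the extremal configurations must live in a single fixed domain (not merely arise as a cross-domain limit), and that $r=\log(e^t-2)$ only makes sense for $t>\log 3$. One caution on your $R$-argument: in $D_0$ the ratio $\eta_{D_0}/\tilde\tau_{D_0}$ tends to $2$ only as $\tilde\tau_{D_0}\to\infty$, so for a \emph{fixed} finite $t$ you cannot simultaneously keep $\tilde\tau_{D_0}(x,y)$ near $t$ and the ratio near $2$ along that sequence; the ``best possible'' claim should therefore be read as best possible uniformly in $(D,x,t)$, which your last paragraph already anticipates.
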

\begin{proof}
Let $y\in B_{\tilde{\tau}_D}(x,t)$, i.e. $\tilde{\tau}_D(x,y)<t$. 
From the left hand side inequality of 
Theorem~\ref{tau-eta}, we have $\eta_D(x,y)<2t(=R)$. On the other hand, if $\eta_D(x,y)<\log(e^t-2)(=r)$, then
from the right hand side inequality of Theorem~\ref{tau-eta}, we have $\tilde{\tau}_D(x,y)<t$. With the similar argument
given in the proof for the sharpness part of second inclusion relation in Corollary~\ref{u-tau-inclusion}, we 
can show that the radius $R$ is the best possible in the punctured space $D_0$ with $t=\log 3$. 
\end{proof}

Comparison of Theorem~\ref{tau-u-g} and Lemma~\ref{tau-eta} together lead to the following relation 
between the $\eta_D$-metric and the $\tilde{\tau}_D$-metric.
\begin{lemma}\label{eta-u}
Let $D\subsetneq \Rn$ and $x,y\in D$. Then
$$\eta_D(x,y)\le u_D(x,y)\le 4\log(2+e^{\eta_D(x,y)}).
$$
\end{lemma}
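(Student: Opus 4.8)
The plan is to obtain both inequalities simply by chaining the two-sided bounds already established between $u_D$ and $\tilde{\tau}_D$ on the one hand, and between $\tilde{\tau}_D$ and $\eta_D$ on the other. Specifically, Theorem~\ref{tau-u-g} supplies $2\tilde{\tau}_D(x,y)\le u_D(x,y)\le 4\tilde{\tau}_D(x,y)$, while Lemma~\ref{tau-eta} supplies $\frac{1}{2}\eta_D(x,y)\le \tilde{\tau}_D(x,y)\le \log(2+e^{\eta_D(x,y)})$. No new geometric or analytic input is needed beyond these two results; the entire argument is a substitution.

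For the left-hand inequality I would use the lower bound $u_D(x,y)\ge 2\tilde{\tau}_D(x,y)$ from Theorem~\ref{tau-u-g} together with the lower bound $\tilde{\tau}_D(x,y)\ge \frac{1}{2}\eta_D(x,y)$ from Lemma~\ref{tau-eta}, so that
$$u_D(x,y)\ge 2\tilde{\tau}_D(x,y)\ge 2\cdot\frac{1}{2}\eta_D(x,y)=\eta_D(x,y).$$
For the right-hand inequality I would instead combine the upper bound $u_D(x,y)\le 4\tilde{\tau}_D(x,y)$ with the upper bound $\tilde{\tau}_D(x,y)\le \log(2+e^{\eta_D(x,y)})$, which gives
$$u_D(x,y)\le 4\tilde{\tau}_D(x,y)\le 4\log(2+e^{\eta_D(x,y)}).$$

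Since each half is a direct insertion into an already-proved estimate, there is essentially no obstacle here; the only thing to be careful about is pairing the inequalities in matching directions (lower bound with lower bound, upper with upper) so that the intermediate constants $2\cdot\tfrac{1}{2}=1$ and $4\cdot 1=4$ come out correctly. Because the extremal configurations realizing sharpness in Theorem~\ref{tau-u-g} and in Lemma~\ref{tau-eta} are not the same, I would not expect the chained constants $1$ and $4$ to be jointly sharp, which is consistent with the statement of Lemma~\ref{eta-u} making no sharpness claim.
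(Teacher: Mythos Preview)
Your proof is correct and is exactly the approach the paper takes: it states Lemma~\ref{eta-u} as an immediate consequence of combining Theorem~\ref{tau-u-g} with Lemma~\ref{tau-eta}, without giving a separate proof. Your explicit chaining of the lower bounds and the upper bounds is precisely what the paper intends.
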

Subsequently, we have the following trivial inclusion relation.
\begin{corollary}
Let $x\in D\subsetneq \Rn$ and $t>0$. Then
$$B_{\eta_D}(x,r)\subseteq B_{u_D}(x,t)\subseteq B_{\eta_D}(x,R),
$$
where $r=\log(e^{t/4}-2)$ and $R=t$.
\end{corollary}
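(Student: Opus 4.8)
The plan is to obtain both bounds by simply chaining the two-sided estimates already established, using $\tilde{\tau}_D$ as the intermediary metric. The lemma relates $\eta_D$ directly to $u_D$, and since each of these is already individually comparable to $\tilde{\tau}_D$ (via Theorem~\ref{tau-u-g} and Lemma~\ref{tau-eta}), no new geometric input is required; the argument is a transitive composition of inequalities.

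First I would establish the lower bound. The left-hand inequality of Theorem~\ref{tau-u-g} gives $u_D(x,y)\ge 2\tilde{\tau}_D(x,y)$, and the left-hand inequality of Lemma~\ref{tau-eta} gives $\tilde{\tau}_D(x,y)\ge \frac{1}{2}\eta_D(x,y)$. Combining these yields
$$u_D(x,y)\ge 2\tilde{\tau}_D(x,y)\ge 2\cdot\frac{1}{2}\eta_D(x,y)=\eta_D(x,y),$$
which is exactly the desired lower estimate.

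Next I would establish the upper bound in the same fashion. The right-hand inequality of Theorem~\ref{tau-u-g} gives $u_D(x,y)\le 4\tilde{\tau}_D(x,y)$, while the right-hand inequality of Lemma~\ref{tau-eta} gives $\tilde{\tau}_D(x,y)\le \log(2+e^{\eta_D(x,y)})$. Chaining these produces
$$u_D(x,y)\le 4\tilde{\tau}_D(x,y)\le 4\log(2+e^{\eta_D(x,y)}),$$
completing the argument.

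There is no substantive obstacle here: the statement is a formal consequence of two previously proved comparison results, and the only point requiring attention is that the numerical constants compose correctly, namely $2\cdot\frac{1}{2}=1$ on the lower side and the factor $4$ carrying through unchanged on the upper side. I would not expect the resulting bounds to be sharp, since each of the two component inequalities already loses a constant, which is consistent with the fact that the lemma makes no sharpness claim.
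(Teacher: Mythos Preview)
You have correctly reproduced the content of Lemma~\ref{eta-u}, namely $\eta_D(x,y)\le u_D(x,y)\le 4\log(2+e^{\eta_D(x,y)})$, by composing Theorem~\ref{tau-u-g} with Lemma~\ref{tau-eta}. That is exactly how the paper obtains Lemma~\ref{eta-u}. However, the statement you were asked to prove is the \emph{Corollary} about ball inclusions, not the Lemma, and your write-up stops one step short: you never pass from the pointwise metric inequalities to the inclusions $B_{\eta_D}(x,r)\subseteq B_{u_D}(x,t)\subseteq B_{\eta_D}(x,R)$ with the specific radii $r=\log(e^{t/4}-2)$ and $R=t$.

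The missing step is routine but must be written. From $\eta_D\le u_D$ one gets: if $u_D(x,y)<t$ then $\eta_D(x,y)<t$, hence $B_{u_D}(x,t)\subseteq B_{\eta_D}(x,t)=B_{\eta_D}(x,R)$. From $u_D\le 4\log(2+e^{\eta_D})$ one gets: if $\eta_D(x,y)<\log(e^{t/4}-2)$ then $2+e^{\eta_D(x,y)}<e^{t/4}$, so $u_D(x,y)<4\cdot(t/4)=t$, hence $B_{\eta_D}(x,r)\subseteq B_{u_D}(x,t)$. (Note also that $r$ is only defined when $e^{t/4}>2$, i.e.\ $t>4\log 2$; for smaller $t$ the left inclusion is vacuous.) Your closing remark that ``the lemma makes no sharpness claim'' confirms the mix-up: the Corollary indeed makes no sharpness claim either, but what needed proving was the ball-inclusion statement, not the metric comparison itself.
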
 

\bigskip

\begin{acknowledgement}

The second author would like to thank Zair Ibragimov for bringing the interesting paper \cite{Ibr11} to his attention and for 
useful discussion on this topic when the author visited him during June 2016.
The authors would also like to thank Manzi Huang for her valuable comments, specially
for nice conversation in the proof of Lemma~\ref{u-tau}. The research was partially 
supported by NBHM, DAE $($Grant No: $2/48 (12)/2016/${\rm NBHM (R.P.)/R \& D II}/$13613)$.
\end{acknowledgement}

\end{document}